\documentclass[12pt, oneside,reqno]{amsart}
\usepackage[left=2.5cm,right=2.5cm, top=2.5cm, bottom=2.5cm, marginparwidth=1.75cm]{geometry}
\usepackage{caption ,subcaption, tikz, color, amsfonts, mathtools, amsmath,mathrsfs, amsthm, todonotes, stmaryrd}
\usepackage[backref=page]{hyperref}
\usepackage{cite, verbatim} 
\usetikzlibrary{patterns}

\usepackage{rotating}
\usepackage{lscape}
\usepackage[
alphabetic,
msc-links,
nobysame,
lite,
]{amsrefs} 
\title[Tropical Division and Factorization of Generalized Permutahedra]{The Tropical Division Problem and the \\ Minkowski Factorization of Generalized Permutahedra}

\author{Robert Alexander Crowell} 
\address{ETH Z\"urich, Department of Mathematics, R\"amistrasse 101, 8092 Z\"urich, Switzerland}
\email{robert.crowell@math.ethz.ch}

\DeclareMathOperator{\R}{\mathbb{R}}
\DeclareMathOperator{\N}{\mathbb{N}}
\DeclareMathOperator{\Z}{\mathbb{Z}}
\DeclareMathOperator{\U}{\mathcal{U}}
\newcommand{\T}{\mathcal{T}}
\newcommand{\V}{\mathcal{V}}

\theoremstyle{plain}
\newtheorem{theorem}{Theorem}[section]
\newtheorem*{theorem*}{Theorem}
\newtheorem{lemma}[theorem]{Lemma}
\newtheorem{proposition}[theorem]{Proposition}
\newtheorem{corollary}[theorem]{Corollary}
\theoremstyle{definition}
\newtheorem{definition}[theorem]{Definition}
\newtheorem{example}[theorem]{Example}

\newtheorem{remark}[theorem]{Remark}
\newtheorem{problem}{Problem}

\begin{document}

\begin{abstract}
Given two tropical polynomials $f, g$ on $\R^n$, we provide a characterization for the existence of a factorization $f= h \odot g$ and the construction of $h$. As a ramification of this result we obtain a parallel result for the Minkowski factorization of polytopes. Using our construction we show that for any given polytopal fan there is a polytope factorization basis, i.e. a finite  set of polytopes with respect to which any polytope whose normal fan is refined by the original fan can be uniquely written as a signed Minkowski sum. We explicitly study the factorization of polymatroids and their generalizations, Coxeter matroid polytopes, and give a hyperplane description of the cone of deformations for this class of polytopes. 
\vskip .1in

\noindent {\bf Keywords:} Tropical Geometry, Tropical Rational Functions, Factorization, Generalized Permutahedra, Polymatroids, Minkowski Sum,  Coxeter Matroid Polytopes

\end{abstract}
\maketitle

\section{Introduction}
Consider the max-plus semi-ring $(\R, \oplus, \odot)$ defined by $a\oplus b:=\max\{a, b\}$ and $a\odot b:=a+b$ for all $a, b \in \R$. Given two tropical polynomial functions on $\R^n$, $$f(x) = \bigoplus_{a\in A}(v_a\odot x^{\odot a}),\quad\text{and}\quad  g(x) = \bigoplus_{b\in B}(u_b\odot x^{\odot b}) $$
where $A, B\subset \Z^n$ are finite sets and $v:A\to\R$, $u:B\to \R$, we pose the following question. 

\begin{problem}[Tropical Division Problem]
\label{prob:tropical.division}Given a pair of tropical polynomials $(f, g)$	decide whether there is a tropical polynomial $h$ such that the factorization $f(x)= g(x)\odot h(x)$ holds. \end{problem}

A tropical polynomial function is a piecewise affine and convex function. Thus the problem asks for conditions on $f$ and $g$ to guarantee that the piecewise affine function $f-g$ is convex. Our characterization of the division problem uses \emph{extended weight functions}, generalizing the work of Mikhalkin \cite[Sec.\ 2]{mikhalkin2004decomposition}, \emph{viz}. 

\begin{theorem}\label{thm:main} The division problem has a solution if and only if $\V(g)\subset \V(f)$ and $$w_f(\sigma)-w^\uparrow_g(\sigma) \geq 0$$ for all $n-1$ dimensional cells $\sigma$ in $\T(f)$. In this case the complex $\T(f)$ is balanced once weighted with $w^\uparrow_h = w_f-w^\uparrow_g$.
\end{theorem}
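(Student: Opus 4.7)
The plan is to reduce the tropical factorization $f = g \odot h$ to the real-valued identity $h = f - g$ and then to characterize when the resulting piecewise affine function is itself a tropical polynomial. The guiding fact is that a function $\R^n\to\R$ is a tropical polynomial precisely when it is convex, piecewise affine, and has integer slopes; equivalently, its corner locus carries a balanced non-negative integer weight on the codimension-one cells of a polyhedral complex. The weight $w_h(\sigma)$ at a codimension-one cell $\sigma$ measures the primitive lattice jump in slope of $h$ across $\sigma$.

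For the forward direction, I would assume $h$ is a tropical polynomial with $f = g \odot h$, so $f = g + h$ pointwise. Across any hyperplane the jumps in slope of convex piecewise affine functions add, and since both summands have non-negative jumps, every corner of $g$ is also a corner of $f$, giving $\V(g)\subset \V(f)$. The same additivity, reinterpreted at the level of integer weights, yields $w_f(\sigma) = w_g^\uparrow(\sigma) + w_h^\uparrow(\sigma) \geq w_g^\uparrow(\sigma)$ on each codimension-one cell $\sigma$ of $\T(f)$, where $w_g^\uparrow$ extends $w_g$ to $\T(f)$ by assigning zero weight on cells lying in the smooth locus of $g$.

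For the converse, I would work on a common polyhedral refinement of $\T(f)$ and $\T(g)$ and define $h := f - g$ there. The hypothesis $\V(g) \subset \V(f)$ ensures $w_g^\uparrow$ is well-defined on the codimension-one skeleton of $\T(f)$, so the jump of $h$ across each such cell is $w_f(\sigma) - w_g^\uparrow(\sigma)\geq 0$, while $h$ is affine across every other codimension-one face. Since convexity of a piecewise affine function with integer slopes is a purely local condition at the codimension-one skeleton, $h$ is convex on $\R^n$ and hence a tropical polynomial. The balancing claim then follows: $\T(h)$ is balanced because $h$ is tropical, and the identity $w_h^\uparrow = w_f - w_g^\uparrow$ together with the balancing of $w_f$ and $w_g^\uparrow$ on the same complex transports this balancing to $\T(f)$ under the new weight function.

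The main obstacle I anticipate is the bookkeeping around the extended weight $w_g^\uparrow$: although $\V(g)\subset \V(f)$ as sets, the polyhedral complex structures of $\T(f)$ and $\T(g)$ need not agree on their common support, so one must verify that $w_g^\uparrow$ descends to a well-defined integer weight on the codimension-one cells of $\T(f)$ and that the balancing of $w_g$ on $\T(g)$ transfers to the (possibly coarser) cell structure on $\T(f)$. A secondary subtlety is checking cell-by-cell that $h = f - g$ has integer slopes on the common refinement, and carefully reconciling the difference between the geometric jump vector across $\sigma$ (which encodes direction) and its lattice length $w(\sigma)$ (which encodes magnitude), so that the weight inequality is indeed equivalent to the pointwise convexity of $h$ and not merely a statement about magnitudes.
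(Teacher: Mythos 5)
Your proposal is correct and follows essentially the same route as the paper: both reduce the problem to the local convexity of $f-g$ across the codimension-one cells of $\T(f)$, measured by the extended weight $w_f - w_g^\uparrow$, with necessity coming from additivity of slope jumps and the balancing claim from linearity of the balancing condition in the weights. The only cosmetic difference is that the paper reconstructs $h$ from the balanced weighted complex $(\T(f), c_f, w_f - w_g^\uparrow)$ by an inductive chamber-by-chamber procedure (its Lemma \ref{lem:extended.balanced}), whereas you define $h = f-g$ directly and verify convexity locally; the well-definedness of $w_g^\uparrow$ and the transfer of balancing that you flag as the main obstacle are exactly what the paper's Lemmas \ref{lem:covector.extensions} and \ref{lem:extended.balanced} are devoted to.
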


Here $\mathcal V (f)$ is the tropical variety of $f$, i.e. the set of points in $\R^n$ where $f$ is not smooth, and $\mathcal T(f)$ is the tropical variety viewed as a $n-1$ dimensional polyhedral complex. The extended weight functions $w_f$ and $w_g^\uparrow$, given in Definition \ref{def:extension},  assign a natural number to each top-dimensional cell and must satisfy a certain \emph{balancing condition} stated in (\ref{eqn:balancing.condition}). With Theorem \ref{thm:main} the divisibility problem can be decided directly from the sets $A, B$ and the tropical varieties, thereby providing a constructive and computable characterization. 

Other recent work on tropical factorization, yet from a different perspective  includes \cite{lin2017linear}.

\subsection{Ramifications and Applications} To demonstrate the applicability of Theorem \ref{thm:main} we discuss some consequences. A lattice polytope $P$ can be viewed as a tropical polynomial $f_P$ with constant coefficients and  vertices of $P$ as exponents. Minkowski sums then translate into products of such polynomials, and the factorization of lattice polytopes in the Minkowski sense can be understood as a special case of the tropical division problem.
\begin{problem}[Polytope Factorization Problem]\label{prob:minkowski.factorization}
Let $(P, Q)$ be a pair of lattice polytopes. Decide if there exists a lattice polytope $R$ such that $P=R+Q$, in the Minkowski sense. 
\end{problem}
Following the terminology of \cite{postnikov2009permutohedra, shephard1963decomposable} we call $Q$ a \emph{deformation} of $P$, and note that in this case the \emph{signed Minkowski sum} $P-Q:=R$ can be meaningfully defined. The factorization problem is also known as the \emph{decomposability problem}  \cite{ostrowski1975multiplication, shephard1963decomposable, smilansky1987decomposability, gao2001decomposition, gao2001absolute, gritzmann1993minkowski}. 
In Section \ref{sec:minkowski.factorization} we show how Theorem \ref{thm:main} can be used to decide the polytope factorization problem. Theorem \ref{thm:main} can also be applied to obtain the following. 

\begin{proposition}[Unique Polytope Factorization]\label{prop:polytope.unique.factorization}
Let $\mathcal N$ be the normal fan of a lattice polytope. Then there exists a finite \emph{polytope factorization basis} $\mathcal B(\mathcal N)=\{B_1,\ldots, B_r \}$ of lattice polytopes such that any lattice polytope $Q$ whose normal fan is refined by $\mathcal N$ possesses a unique expansion \begin{equation}\label{eqn:minkowski.factorization.polytope} Q + \sum_{i=1}^r y^-_i B_i = \sum_{i=1}^r y^+_i B_i,\end{equation} with $y_i \in \mathbb Z$, where $y^+:= \max\{y, 0 \}$ and $y^-:=y^+-y$. 
\end{proposition}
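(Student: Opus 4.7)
The plan is to recast the problem in terms of support functions, so that the required basis becomes a $\Z$-basis of a free abelian group lying inside a rational polyhedral cone, and then to produce such a basis by induction on the rank.

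First I would parametrize the lattice polytopes $P$ whose normal fan is refined by $\mathcal N$ by the tuple of support-function values at the primitive ray generators $\rho_1,\ldots,\rho_d$ of $\mathcal N$: the map $\phi(P):=(h_P(\rho_1),\ldots,h_P(\rho_d))\in\Z^d$ is an injective semigroup homomorphism, since $h_{P+Q}=h_P+h_Q$ and a polytope is determined by its support function. Let $L\subset\Z^d$ be the subgroup generated by the image of $\phi$; it is a free abelian group of some finite rank $r$. The images of the actual polytopes span a rational polyhedral cone $C\subset L\otimes\R$ --- the deformation cone of $\mathcal N$ --- that is full-dimensional because any lattice polytope $P_0$ with normal fan exactly $\mathcal N$ maps to an interior point $\phi(P_0)$ of $C$.

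The core of the argument is the following lattice-theoretic lemma: \emph{every full-dimensional rational polyhedral cone $C$ in $L\otimes\R$ contains a $\Z$-basis of $L$}. I would prove it by induction on $r=\mathrm{rank}(L)$. The case $r=1$ is immediate. For the inductive step, pick a facet hyperplane $H$ of $C$ defined by a primitive functional on $L$ (so that $L/(L\cap H)\cong\Z$); apply the hypothesis to $F:=C\cap H$ inside $L\cap H$ to obtain a basis $v_1,\ldots,v_{r-1}$ of $L\cap H$ lying in $F$; choose a generator $w\in L$ of $L/(L\cap H)$ and a lattice point $v_0$ in the relative interior of $F$. For $N$ sufficiently large, $w+Nv_0$ satisfies every facet inequality of $C$ strictly except the one cutting out $H$, so it lies in $C$, and $\{v_1,\ldots,v_{r-1},\,w+Nv_0\}$ is the desired $\Z$-basis of $L$ inside $C$. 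Taking lattice polytopes $B_1,\ldots,B_r$ whose support-function tuples realize this basis defines $\mathcal B(\mathcal N)$. The conclusion is then routine: for any lattice polytope $Q$ with normal fan refined by $\mathcal N$, the vector $\phi(Q)\in L$ has a unique expansion $\phi(Q)=\sum_{i=1}^r y_i\,\phi(B_i)$ with $y_i\in\Z$; splitting $y_i=y_i^+-y_i^-$ and using additivity and injectivity of $\phi$ on actual polytopes yields the signed Minkowski identity \eqref{eqn:minkowski.factorization.polytope}, with uniqueness of the $y_i$ inherited from the basis property.

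The main obstacle will be the lattice-basis lemma. The naive attempt --- translating a fixed $\Z$-basis of $L$ into $C$ by adding a common interior lattice vector $v$ --- fails because the resulting transition matrix has the form $I+N\mathbf 1 v^{\top}$, whose determinant $1+N\sum v_j$ is generally not $\pm 1$. The inductive descent through rational facets circumvents this, but requires care in choosing primitive facet equations (to keep $L\cap H$ saturated in $L$) and in finding lattice points in the relative interior of $F$ at each stage. The remaining steps amount to the standard dictionary between support functions on $\mathcal N$ and Minkowski sums.
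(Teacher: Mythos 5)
Your argument is correct in outline but takes a genuinely different route from the paper's. The paper encodes a polytope by its balanced weight vector on the $(n-1)$-cones of $\mathcal N$ (lattice lengths of the dual edges) and builds $\mathcal B(\mathcal N)$ from a non-negative lattice basis of the span of $W(\mathcal N)=\bigcap_{A}\ker_{\mathbb Z}\phi^A\cap \mathbb N_0^m$; you encode it by its support values at the rays and work inside the deformation cone. The substantive difference is at the crux: the paper simply asserts that, because the kernel lattice contains a strictly positive vector, it "has some basis contained in $\mathbb N^m$", whereas you correctly observe that this is not automatic (translating a fixed basis by a large interior vector changes the index, exactly as your determinant computation $\det(I+N\mathbf 1 v^{\top})=1+N\sum v_j$ shows) and you supply the missing argument: the facet-induction lemma that a full-dimensional rational polyhedral cone contains a $\Z$-basis of the ambient lattice. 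That lemma is precisely what the paper's one-line claim is silently invoking, so on this point your write-up is more complete than the paper's; the induction itself (saturation of $L\cap H$ via a primitive facet functional, a basis of the facet, and $w+Nv_0$ for $N\gg 0$) is sound.

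One step of your version does need an argument that you omit: you "take lattice polytopes $B_1,\ldots,B_r$ whose support-function tuples realize this basis", i.e.\ you assume every lattice point of $L$ lying in $C$ is the support vector of a \emph{lattice} polytope. Integrality of the support values alone does not give this: for the normal fan of $\mathrm{conv}((0,0),(2,0),(0,1))$, the integer support vector $(1,0,0)$ at the rays $(-1,0),(0,-1),(1,2)$ defines the triangle with the non-lattice vertex $(-1,\tfrac12)$. What rescues you is membership in $L$ together with convexity: a point of $L\cap C$ equals $\phi(P_1)-\phi(P_2)$ with $P_1,P_2$ lattice polytopes and is the support vector of a genuine polytope $R$ with $R+P_2=P_1$; since every vertex of $P_1$ decomposes as a vertex of $R$ plus a vertex of $P_2$, and every vertex of $R$ arises this way, $R$ has lattice vertices. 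With this saturation statement added your proof is complete. The paper sidesteps the issue because in its weight coordinates an integral balanced weight vector reconstructs, via the inductive wall-crossing procedure, to a piecewise affine function with integral slopes, hence to a lattice Newton polytope.
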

In particular, if $\mathcal N$ is the normal fan of the Mikowski sum of lattice polytopes $P_1, \ldots, P_k$, then each $P_i$ is uniquely factorizable with respect to $\mathcal B(\mathcal N)$. While polytope factorization bases are not unique, their cardinality is, and so is any expansion with respect to a fixed basis. 
For a different result in this direction we emphasize \cite[Cor.\ 23]{lin2017linear}. Their approach is based on support functions rather than weighted and balanced fans. 

\subsubsection{Factorization of Generalized Permutahedra and Polymatroids} An important class of polytopes whose deformations are the focus of extensive study are \emph{polymatroids}, which for the purpose of introduction are lattice polytopes with edges parallel to some vector $e_i-e_j\in \R^{n+1}$ for $i\neq j$  in $[n+1]$, \cite{gelfand1987combinatorial}. They are also called  \emph{generalized permutahedra} in \cite{postnikov2009permutohedra, postnikov2008faces}, \emph{$M$-convex sets} in \cite{murota2003discrete}, or \emph{type $A_n$ Coxeter matroid polytopes} in \cite{borovik2003coxeter}. For polymatroids a  factorization basis in the sense of Proposition \ref{prop:polytope.unique.factorization} is known to be the geometric simplex and its faces, \cite{feichtner2005matroid, ardila2010matroid}, yet the orthodox technique of proof in this special case differs from ours and relies heavily on the structure of polymatroids.  
Explicitly, if $M$ is a polymatroid, then there are unique weights $y_I\in \Z$ for $I\subset [n+1]$ such that the  factorization 
\begin{equation}\label{eqn:signed.minkowski.polymatroid} M+\sum_{I\subset [n+1]}y_I^-\cdot \triangle_I = \sum_{I\subset [n+1]}y_I^+\cdot \triangle_I\end{equation}
holds true, where $\triangle_I =\text{conv}(e_i~:i\in I)$. This in turn justifies the notation $$M=\sum_{I\subset [n+1]}y_I \cdot \triangle_I.$$ The sum, however, is merely formal since signed Minkowski sums do not commute. Giving a concise description of the weights that define a polytope is the content of the next problem.
\begin{problem}[Polymatroid Representation Problem]\label{prob:polymatroid.factorization}
Let $y_I\in \Z$ for $I\subset [n+1]$ be weights. Does there exist a polymatroid $M$ such that (\ref{eqn:signed.minkowski.polymatroid}) holds? \end{problem}
The set of such weights forms a cone, known as the \emph{deformation cone of type $A_n$}, \cite{ardila2019coxeter}. We use Theorem~\ref{thm:main} to give a characterization of this cone that is independent of submodular functions, distinguishing it from \cite[Prop.\ 2.3]{ardila2010matroid} and \cite[Thm. 12.3]{aguiar2017hopf}.

\begin{theorem}[Deformation Cone of Type $A_n$]\label{thm:fractorization.m.convex} Let $y_I\in\Z$ for $I\subset [n+1]$. There exists a polytope $M$ such that (\ref{eqn:signed.minkowski.polymatroid}) holds if and only if \begin{equation}\label{eqn:polymartoid.condition}\sum_{I\subset [n+1]}y_I w^\uparrow_I(\pi)\geq 0\end{equation} for all $\pi\in\Pi_{[n+1]}$, where $w^\uparrow$ is the weight matrix of type $A_n$. 
\end{theorem}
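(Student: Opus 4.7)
The plan is to deduce the theorem as a specialization of Theorem~\ref{thm:main} to the braid arrangement. First I would set $P := \sum_{I\subset [n+1]} y_I^+\, \triangle_I$ and $Q := \sum_{I\subset [n+1]} y_I^-\, \triangle_I$; since the coefficients are nonnegative integers these are honest Minkowski sums, and by the dictionary preceding Problem~\ref{prob:minkowski.factorization} the existence of a polytope $M$ satisfying (\ref{eqn:signed.minkowski.polymatroid}) is equivalent to the tropical divisibility of $f_P$ by $f_Q$, which is Problem~\ref{prob:tropical.division} for the pair $(f_P, f_Q)$.

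The next step is to analyze the extended weights on both sides. Since Minkowski sum translates into tropical product and the extended weight function is additive under tropical multiplication, one obtains $w_{f_P} = \sum_{I} y_I^+\, w^\uparrow_{\triangle_I}$ and $w^\uparrow_{f_Q} = \sum_{I} y_I^-\, w^\uparrow_{\triangle_I}$. The entry $w^\uparrow_I(\pi)$ of the type $A_n$ weight matrix is by construction the extended weight of the simplex $\triangle_I$ on the codimension-one cell $\sigma_\pi$ of the braid fan indexed by $\pi \in \Pi_{[n+1]}$. Subtracting the two identities and evaluating the weight inequality of Theorem~\ref{thm:main} at $\sigma_\pi$ rewrites $w_{f_P}(\sigma_\pi) - w^\uparrow_{f_Q}(\sigma_\pi) \geq 0$ as precisely (\ref{eqn:polymartoid.condition}). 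For the converse direction I would verify that (\ref{eqn:polymartoid.condition}) already implies the set-theoretic inclusion $\V(f_Q) \subset \V(f_P)$ required by Theorem~\ref{thm:main}: both varieties are supported on subcomplexes of the braid arrangement, and whenever $\sigma_\pi$ carries strictly positive weight $w^\uparrow_{f_Q}(\sigma_\pi)$ the inequality (\ref{eqn:polymartoid.condition}) forces $w_{f_P}(\sigma_\pi)$ to be at least as large, so $\sigma_\pi \subset \V(f_P)$. Theorem~\ref{thm:main} then yields a balanced weighting $w^\uparrow_h = \sum_I y_I\, w^\uparrow_I$ of $\T(f_P)$, and hence a polytope $M$ realizing (\ref{eqn:signed.minkowski.polymatroid}).

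The main obstacle I anticipate is the cell-by-cell identification of $w^\uparrow_I(\pi)$ with the extended weight of $\triangle_I$, together with the additivity of extended weights under tropical products. The latter is genuinely stronger than the analogous statement for ordinary weights, because $w^\uparrow$ also records walls on which $f$ remains affine but on which several monomials of the underlying tropical polynomial are active; a careful inspection of Definition~\ref{def:extension} in the product case is therefore required. Once these two ingredients are in hand, the theorem reduces to a direct translation of Theorem~\ref{thm:main} into the language of polymatroids. A pleasant byproduct of this route is that the characterization avoids submodular functions altogether, as promised in the paragraph preceding the theorem.
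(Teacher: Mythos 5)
Your proposal is correct and follows essentially the same route as the paper: the paper's own proof is a three-line application of Proposition~\ref{prop:w.matrix} (identifying the cones of $\U$ with $\Pi_{[n+1]}$ and $w^\uparrow_I(\pi)$ with the extended weight of $\triangle_I$) followed by Proposition~\ref{prop:polytope.factorization} applied to $R=\sum_I y_I^+\triangle_I$ and $Q=\sum_I y_I^-\triangle_I$. The two points you flag as needing care --- additivity of extended weights under Minkowski sum and the refinement hypothesis $\V(f_Q)\subset\V(f_P)$ --- are left implicit in the paper, and your verification of both (additivity via the Newton-subdivision duality, refinement via positivity of $w^\uparrow_Q$ forcing positivity of $w_P$) correctly fills those gaps.
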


The set $\Pi_{[n+1]}$ consists of ordered partitions of $[n+1]$ into $n$ parts, which encode the combinatorics of the $n-1$ dimensional cones of the Braid arrangement. The weight matrix of type $A_n$ is a 0-1-matrix given in Section \ref{sec:weight.matrtix.A}. As an example we provide the weight matrices of types  $A_2$ and $A_3$ in Appendix \ref{sec:appendix.weights.matrix}, showing that (\ref{eqn:polymartoid.condition}) is easily computed. 

The theorem has an interesting interpretation in terms of matroids. The elements in the cone  defined by (\ref{eqn:polymartoid.condition}) are in bijection with polymatroids in $\R^n$. Certain polymatroids are matroid basis polytopes \cite[Thm.\ 4.1]{gelfand1987combinatorial} and provide another description of matroids, much like the well known correspondence with submodular functions, e.g.\ \cite{edmonds2003submodular, murota2003discrete}. The Gelfand-Serganova Theorem \cite{gelfand1987combinatorial, borovik2003coxeter} together with our characterization in Theorem  \ref{thm:fractorization.m.convex} show that the weighted Minkowski sum description of polymatroids elicits significant information about the structure of the underlying matroid. To be more specific, note that the vertices of the matroid basis polytope correspond to the bases of the underlying matroid on $[n+1]$. The presence of an edge parallel to $e_i-e_j$ in the basis polytope is equivalent to an  exchange of $i$ with $j$ in the corresponding bases. On the level of  weighted fans, the  chambers correspond to bases, and the $n-1$ dimensional cones to possible exchanges. Once this identification has been made, a non-zero weight $y\cdot w^\uparrow(\pi)>0$ on the $n-1$ cone $\pi$ is equivalent to an exchange in the matroid defined by (\ref{eqn:signed.minkowski.polymatroid}), and a weight of zero $y\cdot w^\uparrow(\pi)=0$ means that no such basis exchange $\pi$ is present.

\subsubsection{Factorization of Coxeter Polytopes} It is also worthwhile pointing out that these considerations generalize to all finite reflection groups, allowing us to study other classes of polytopes with rich combinatorial structure in a similar fashion. One class we emphasize is that of \emph{Coxeter Polytopes}, or \emph{$\Phi$-Polytopes}, which are polytopes whose edges are parallel to roots in a given crystallographic system $\Phi$, \cite{borovik1997coxeter, borovik2003coxeter}. In fact  polymatroids are nothing but type $A_n$ Coxeter lattice polytopes. 

To our knowledge there is no systematic theory on the factorization of Coxeter polytopes. Proposition~\ref{prop:polytope.unique.factorization} shows that a basis expansion for such polytopes must exist. With the aide of the associated Weyl group associated to $\Phi$ the strategy that leads to Theorem  \ref{thm:fractorization.m.convex} easily extends to obtain a characterization of the deformation cone for general $\Phi$-Polytopes. We thereby obtain analogous results for any finite reflection group in the form of Theorem \ref{thm:coxeter.factorization}. 

It is \emph{a priori} not clear how to generalize the techniques used by other authors in the case of polymatroids to all Coxeter polytopes, mainly because they rely on the characterization of polymatroids via submodular functions, which does not easily extend to other root systems, \emph{cf.} \cite{aguiar2017hopf, ardila2010matroid, ardila2019coxeter}. In contrast, on the level of balanced and weighted fans we can use the action of the  Weyl group on the Coxeter Arrangement of $\Phi$ to systematize  calculations. For instance we explicitly treat the Coxeter group of type $BC_2$ in Example \ref{ex:coxeter.expansion}.

\subsection*{Organization} We recall some terminology from tropical geometry and a theorem of \cite{mikhalkin2004decomposition} in Section \ref{sec:perliminaries}. Section \ref{sec:division.problem} contains the definition of extended weight functions and the proof of Theorem \ref{thm:main}. This section answers the tropical division problem. Sections \ref{sec:minkowski.factorization} to \ref{sec:minkowski.generalized.permutahedra} contain applications and ramifications of Theorem \ref{thm:main}. Section \ref{sec:minkowski.factorization} is concerned with the polytope factorization problem, Section \ref{sec:factorization.generalized.permutahedra.A} with the polymatroid representation problem, and Section~\ref{sec:minkowski.generalized.permutahedra} extends our approach to general reflection groups. 
\subsection*{Notation} For an integer $n$, we denote by $[n]$ the set $\{1, \ldots, n\}$. If $I\subset [n+1]$ and $I\neq \emptyset$ then $\triangle_I=\text{conv}(\{e_i~:~i\in I\})\subset \R^{n+1}$ denotes the $n$-dimensional simplex embedded into $\R^{n+1}$, and $\text{conv}$ denotes the convex hull. We work in the tropical max-plus algebra, defined by $a\oplus b := \max\{a, b \}$ and $a\odot b := a+b$, componentwise for $a, b \in \R^n$. For a tropical polynomial $f$, we denote its tropical variety viewed as a set by $\V(f)$, and its tropical variety viewed as a $n-1$ dimensional polyhedral complex by $\T(f)$. For a polytope $P$ we denote by $\mathcal N_P$ its normal fan, and the symbol $\wedge$ is used for the refinement of fans or polyhedral complexes. If $x\in \R^n$, then $x^+:= \max\{x, 0 \}$ and $x^-:=x^+-x$ denote the positive and negative parts. 

\subsection*{Acknowledgements} I thank Ngoc Mai Tran for pointing out to me the tropical division problem and some of its ramifications. This work would not have been started without her initiative. Her comments on Section \ref{sec:cones.universal.fan} were greatly appreciated. My thanks extend to Komei Fukuda for an interesting discussion, the questions for further research he raised and the encouragement he offered. I thank Alex Fink for pointing out several errors and and providing detailed feedback on this manuscript.

\section{Preliminaries}\label{sec:perliminaries} 

\subsection{Tropical Polynomials}\label{sec:tropical.polynomials} We work in the tropical max-plus algebra. This is $\R$ equipped with the algebraic operations $a\oplus b := \max\{a, b \}$ and $a\odot b := a+b$, which turn $(\R, \oplus, \odot)$ into a semi-ring that lacks an additive identity. The operations are extended to $\R^n$ componentwise. A \emph{tropical polynomial function} on $\R^n$ is a piecewise affine and convex map \begin{equation}\label{eqn:tropical.polynomial} f(x) = \bigoplus_{a\in A}(v_a\odot x^{\odot a})\end{equation} for a finite set $A \subset \Z^n$ and a function $v:A\to\R$. The set $\mathcal \V(f)\subset \R^n$ of points for which $x\mapsto f(x)$ is not smooth is known as the \emph{tropical hypersurface} defined by $f$, \cite{maclagan2015introduction}. The coarsest $n-1$ dimensional polyhedral complex in $\R^n$ with the property that $f$ is affine on each cell will be denoted by $\T(f)$. Evidently the support of $\T(f)$ is $\V(f)$. Furthermore we write $\T(f)_{\leq k}$ for the $k$-skeleton, and $\T(f)_k$ for the set of $k$-cells. 

If $f, g$ and $h$ are tropical polynomials such that  $f=g\odot h$, then we have the relation $$ \V(f) = \V(g\odot h) = \V(h) \cup \V(g),$$ and thus $\V(h), \V(g)\subset \V(f)$, e.g.\  \cite{joswig2017cayley}. This provides us with an immediate necessary condition for the tropical division problem.

\subsection{Weighted and Balanced Complexes}\label{sec:prelim.weighted.complexes} 
Let $\Sigma$ be a rational polyhedral complex, pure of dimension $n-1$. We view the complex $\Sigma$ as weighted by means of a \emph{weight function} $w:\Sigma_{n-1} \to \N$. Given a pair of cells $\tau\subset \sigma$ where $\sigma\in \Sigma_{n-1}$  and $\tau\in \Sigma_{n-2}$, let $L_{\Z}(\tau)$ and $L_{\Z}(\sigma)$ be the $\Z$ linear spaces parallel to $\tau$, and $\sigma$, respectively. Choose $c_{\Sigma, \tau, \sigma}$ to be the primitive vector contained in $L_{\Z}(\sigma)$ which is orthogonal to $L_{\Z}(\tau)$ and points at $\sigma$. The latter condition means that $t+c_{\Sigma, \tau, \sigma}\in \sigma$ for some $t\in \tau$. The linear functional $c_{\Sigma, \tau}(\sigma, \cdot) : \Z^n \to \Z$ represented by $c_{\Sigma, \tau, \sigma}$ is called the \emph{covector map} at $\tau$.  
We shall say that the complex $\Sigma$ is \emph{balanced at $\tau$} if \begin{equation}\label{eqn:balancing.condition} \sum_{ \tau\subset \sigma}w(\sigma) c_{\Sigma, \tau}(\sigma, x)=0  \mod L_{\Z}(\tau) \quad \text{for all}~x\in \Z^n.\end{equation} We say that $\mathcal T(f)$ is a \emph{balanced complex} if it is balanced at all its $n-2$ dimensional cells.

Recall from \cite[Sec.\ 2]{mikhalkin2004decomposition} how to view $\mathcal T(f)$ as a rational, weighted and balanced polyhedral complex. By definition, each top-dimensional cell $\sigma\in \mathcal T(f)$ is contained in the intersection-locus of two affine functions $\{x : v_a+ a\cdot x = v_{a'}+ a'\cdot x\}$ for some distinct $a, a' \in A$. The exponents $a\in A$ of the tropical polynomial can thus be used to define the covector maps, which we shall denote by $c_f$ in this case. In particular the complex $\mathcal T(f)$ is pure of dimension $n-1$ and rational. The coefficients $v_a$ for $a\in A$ can be \emph{lifted} to $(a, v_a)\in A\times \R^n$. By projecting the bounded faces of $\text{conv}(\{(a, v_a)+\R_{\geq 0}\cdot (0,\ldots,0, 1): a\in A\})$ onto the \emph{Newton polytope} $\text{Newt}(f):=\text{conv}(A)\subset \R^n$ we obtain the \emph{regular subdivision} $\Delta_f$ of $\text{Newt}(f)$ induced by $f$. For each $n-1$ cell $\sigma$ in $\T(f)$ there is an edge $e_\sigma \in \Delta_f$ dual under the Legendre transform, \cite[Prop.\ 2.1]{mikhalkin2004decomposition}. We set the weight $w_f(\sigma)$ to be one less than the number of lattice points in $e_\sigma$, or equivalently the greatest common divisor of the coordinates in vector obtained by translating the edge to the origin. A combinatorial interpretation of this construction is found in  \cite[Sec.\ 3.3]{maclagan2015introduction}, and \cite{fulton1997intersection}, \cite[Sec.\ 13.1]{richter1996realization}.

The following well-known characterization is  fundamental in tropical geometry. 
\begin{theorem}\cite[Prop.\ 2.4]{mikhalkin2004decomposition} \label{thm:mikhalin} A rational, weighted, polyhedral complex $\Pi\subset \R^n$, pure of dimension $n-1$ is the tropical hypersurface of a tropical polynomial if and only if it is balanced. 
\end{theorem}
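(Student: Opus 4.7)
The forward direction is direct: if $\Pi = \mathcal T(f)$ for a tropical polynomial $f$, then the construction recalled above identifies each top-dimensional cell $\sigma \in \Pi_{n-1}$ with an edge $e_\sigma$ in the regular subdivision $\Delta_f$ of $\text{Newt}(f)$, and the weight $w_f(\sigma)$ with the lattice length of $e_\sigma$. For a codimension-two cell $\tau \in \Pi_{n-2}$, the edges $e_\sigma$ with $\tau \subset \sigma$ bound a two-dimensional face $F_\tau$ of $\Delta_f$, and the covector $c_{\Pi,\tau,\sigma}$ is (up to sign) the primitive normal in the plane $\langle F_\tau \rangle$ to $e_\sigma$ rotated by a quarter turn. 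Thus the balancing condition at $\tau$ becomes the statement that the boundary edges of the polygon $F_\tau$, taken with the correct orientation and lattice length, sum to zero — the familiar closing-up of an oriented polygonal loop in a lattice plane.

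For the converse, the plan is to construct the piecewise affine convex function $f$ directly. Since $\Pi$ is pure of dimension $n-1$, its complement $\R^n \setminus \Pi$ decomposes into open chambers, and on each chamber $C$ I would seek an affine function $\ell_C(x) = a_C \cdot x + b_C$ with $a_C \in \Z^n$. Two adjacent chambers $C$ and $C'$ separated by a wall $\sigma \in \Pi_{n-1}$ with primitive covector $c_{\Pi,\tau,\sigma}$ pointing from $C$ into $C'$ must satisfy $a_{C'} - a_C = w(\sigma)\, c_{\Pi,\tau,\sigma}$, so that $\ell_{C'} - \ell_C$ vanishes on the affine hull of $\sigma$ and the jump in slope reproduces the prescribed weight. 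Fixing a base chamber, the balancing condition (\ref{eqn:balancing.condition}) at every codimension-two cell $\tau$ is precisely what makes these prescribed slope-jumps sum to zero around any small loop crossing the codimension-two skeleton; since $\R^n \setminus \Pi_{\leq n-3}$ is simply connected (generically, the codimension-three skeleton has codimension at least three in $\R^n$), this allows the $a_C$ to be defined consistently on all chambers, and the constants $b_C$ are then pinned down by continuity across each wall.

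The main obstacle is to upgrade this continuous piecewise affine $f$ to a \emph{convex} function, so that it really is a tropical polynomial of the form (\ref{eqn:tropical.polynomial}) with non-smooth locus exactly $\Pi$ and matching weights. Positivity of $w(\sigma) > 0$ guarantees that the slope strictly increases across each wall in the direction of its normal, which is a local convexity condition across codimension-one cells; the subtle part is promoting this to global convexity. I would handle this by dualizing: the data $\{(a_C, -b_C) \in \R^n \times \R\}$ are candidate lifts, and I would argue that they are exactly the lower vertices of the upper convex hull of a configuration supported on $A := \{a_C : C \text{ a chamber}\} \subset \Z^n$, with the regular subdivision of $\text{conv}(A)$ it induces having the combinatorics dual to $\Pi$. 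Unwinding the Legendre-transform correspondence recalled before the theorem statement then yields both convexity of $f$ and the equalities $\mathcal V(f) = \Pi$ and $w_f = w$ simultaneously; the content this leaves unverified — that no spurious wall appears and no weight is absorbed — is ruled out, again, by $w > 0$ together with the balancing condition.
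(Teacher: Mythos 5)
First, a contextual remark: the paper does not prove this statement — it is quoted from Mikhalkin — and the only ingredient it actually uses is the constructive converse direction, which reappears almost verbatim as the wall-crossing recursion (\ref{eqn:inductive.definition}) inside the proof of Lemma \ref{lem:extended.balanced}. Your overall architecture (duality with the regular subdivision $\Delta_f$ for the forward direction; a monodromy-free assignment of slopes $a_C$ over the chambers, using simple connectivity of the complement of the $(n-3)$-skeleton, for the converse) is the standard one and matches that construction. Before the main issue, one notational error that would propagate: the covector $c_{\Pi,\tau,\sigma}$ is by definition contained in $L_{\Z}(\sigma)$, i.e.\ \emph{parallel} to the wall $\sigma$, so it cannot be the slope jump $a_{C'}-a_C$, which must be orthogonal to $L_{\Z}(\sigma)$ in order for $\ell_{C'}-\ell_C$ to vanish on the affine hull of $\sigma$. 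The correct vector is the primitive normal to $\sigma$ pointing into $C'$ (the paper's $d_{f,\sigma,D'}$), which is the quarter-turn of $c_{\Pi,\tau,\sigma}$ in the plane transverse to $\tau$; you state this rotation correctly in the forward direction and then drop it in the converse.

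The genuine gap is the convexity step. Your proposed fix — declaring the lifts $(a_C,-b_C)$ to be the vertices of an upper convex hull whose induced regular subdivision is dual to $\Pi$ — is circular as stated: the claim that these lifted points all lie on the upper hull and that the Legendre correspondence recovers $\Pi$ with the given weights is \emph{equivalent} to the convexity of $\max_C\ell_C$ together with $f=\max_C\ell_C$, which is exactly what is to be proved; asserting that the remaining content ``is ruled out by $w>0$ together with balancing'' supplies no argument. The clean way to close this is elementary and needs no duality: a continuous piecewise affine function whose domains of affineness are the closures of the chambers of the complement of the support of $\Pi$ is convex if and only if it is convex across every wall, because a line segment in general position meets only chambers and relative interiors of walls, so the restriction of $f$ to it is a one-variable piecewise affine function whose kinks are all nonnegative (here strictly positive, since $w(\sigma)\geq 1$), hence convex; the case of an arbitrary segment follows by perturbation and continuity. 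Global convexity then yields $f=\max_C\ell_C$, which is of the form (\ref{eqn:tropical.polynomial}) because the $a_C$ are integral, and $w(\sigma)>0$ guarantees a genuine kink across each wall, so that $\V(f)$ equals the support of $\Pi$ with matching weights. With these two repairs your outline becomes a complete proof.
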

The proof of this theorem is constructive. Given a rational, weighted and balanced complex $\Pi$, it provides us with a function $f_\Pi$, unique up to an affine function, whose non-smooth points are precisely the support of $\Pi$.

\section{The Division Problem}\label{sec:division.problem} Let $(f, g)$ be given tropical polynomials and denote their tropical varieties by $\mathcal T(f)$ and $\mathcal T(g)$. For these varieties we have the covector and weight functions $c_f, c_g$ and $w_f, w_g$, respectively, as defined in Section \ref{sec:prelim.weighted.complexes}. The following is straightforward from the definitions.

 \begin{lemma}\label{lem:covector.extensions} Suppose $\V(g)\subset \V(f)$, and let $c_f$ and $c_g$ be the covector maps. View the refinement $\T:=\T(g)\wedge \T(f)$ as a subcomplex of $\T(f)$. Then covector map $c_g$ extends uniquely to  $\T$, in the sense that if $\tau\subset \sigma$ in $\T$ and $\tau\subset \sigma'$ in $\T(g)$ are pairs of $n-2$ and $n-1$ cells such that $\sigma\subset \sigma'$, then $c_{\T, \tau,\sigma}$ equals $c_{g,\tau, \sigma'}$ up to a unique sign. Moreover, the extension of $c_g$ agrees with $c_f$ on $\T(f)$.\end{lemma}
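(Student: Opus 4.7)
The proof will exploit the fact that each covector $c_{\Sigma, \tau, \sigma}$ is fully determined by the pair of lattices $L_\Z(\tau) \subset L_\Z(\sigma)$ together with an orientation specifying which side of $L_\Z(\tau)$ the cell $\sigma$ lies on. My plan is first to show that the common refinement $\T$ is a subdivision of $\T(f)$ sharing the same support $\V(f)$: because $\V(g) \subset \V(f)$, intersecting $\T(f)$-cells with $\T(g)$-cells only subdivides $\T(f)$ and does not enlarge its support. Consequently, each $(n-1)$-cell $\sigma \in \T$ sits as a full-dimensional sub-polytope of a unique $(n-1)$-cell $\sigma'' \in \T(f)$, and whenever $\sigma \subset \V(g)$, also inside a unique $(n-1)$-cell $\sigma' \in \T(g)$. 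Because these inclusions are between polytopes of the same dimension, the affine hulls coincide, so
$$ L_\Z(\sigma) = L_\Z(\sigma'') = L_\Z(\sigma'). $$

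Given a pair $\tau \subset \sigma$ in $\T$ together with $\sigma \subset \sigma'$ in $\T(g)$, both $c_{\T, \tau, \sigma}$ and $c_{g, \tau, \sigma'}$ are then primitive generators of the same rank-one quotient lattice $L_\Z(\sigma)/L_\Z(\tau) = L_\Z(\sigma')/L_\Z(\tau)$, so they agree up to sign. The sign of $c_{\T, \tau, \sigma}$ is fixed by ``pointing into $\sigma$'', that of $c_{g, \tau, \sigma'}$ by ``pointing into $\sigma'$''; since $\sigma \subset \sigma'$, both orientation conditions are selected by the same half-space and consequently pick the same primitive vector. The same argument applied to $\T(f)$ with $\sigma''$ in place of $\sigma'$ yields $c_{\T, \tau, \sigma} = c_{f, \tau, \sigma''}$, which is precisely the asserted agreement of the extended $c_g$ with $c_f$.

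Uniqueness is then immediate: any extension of $c_g$ to $\T$ compatible with the $\T(g)$ data at the listed pairs is forced to equal $c_{\T, \tau, \sigma}$, a quantity determined by $\T$ alone. The only subtle step is verifying that the two orientation conventions really do pick the same sign; I expect this to be the main obstacle, but it reduces to the short geometric check that in a small neighbourhood of $\tau$ the cells $\sigma$ and $\sigma'$ lie on the same side of the affine hyperplane through $\tau$ parallel to $L_\Z(\tau)$, which follows at once from $\sigma \subset \sigma'$ together with the $(n-1)$-dimensionality of both cells.
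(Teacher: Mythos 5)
Your argument is correct, and it is essentially the ``straightforward from the definitions'' verification that the paper omits: it records only that the affine hulls (hence the lattices $L_{\Z}(\sigma)=L_{\Z}(\sigma')=L_{\Z}(\sigma'')$) coincide because the inclusions are full-dimensional, so the primitive vector orthogonal to $L_{\Z}(\tau)$ is determined up to sign, with the sign pinned down by the refined cell $\sigma$. One caution on your final ``geometric check'': in the typical case $\tau$ is \emph{not} a cell of $\T(g)$ but lies in the relative interior of $\sigma'$ (this is exactly the situation exploited later in the proof of Lemma \ref{lem:extended.balanced}), and then $\sigma'$ meets both sides of the hyperplane through $\tau$ parallel to $L_{\Z}(\tau)$, so ``pointing at $\sigma'$'' selects no sign at all and $c_{g,\tau,\sigma'}$ is itself two-valued; your claim that the two orientation conditions pick the same vector should therefore be weakened to the (still sufficient) implication that any vector pointing at $\sigma$ also points at $\sigma'$, which is why the lemma asserts equality only ``up to a unique sign,'' the sign being the one singled out by $c_{\T,\tau,\sigma}$.
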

In case $\V(g)\subset \V(f)$ we would like to extend $w_g$ to be defined on the finer complex $\T(f)$ in such a way that together with $c_f$ the weighted complex $(\T(f), w^\uparrow_g, c_f)$ is balanced.

\begin{definition}\label{def:extension}Let $w_g:\T(g)_{n-1}\to \N$ be the weight function defined from the tropical polynomial $g$. The \emph{extended weight function} $w^\uparrow_g:\T(f)_{n-1}\to\N\cup\{0\}$ is defined via: 
\begin{align*}
w_g^\uparrow(\sigma)&:=0 &\text{if there does not exist}~\sigma'\in \T(g)_{n-1}~\text{s.t}~\sigma\subset \sigma'
\\
w_g^\uparrow(\sigma)&:=w(\sigma') &\text{if there is a}~\sigma'\in \T(g)_{n-1}~\text{s.t}~\sigma\subset \sigma'
\end{align*}
for all $\sigma \in \T(f)_{n-1}$.
\end{definition}

\begin{lemma}\label{lem:extended.balanced} If $\V(g)\subset \V(f)$, then the complex $\T(f)$ is balanced with the extended weights $w_g^\uparrow$.  
Moreover, the graph of any convex function defined from the extended weights $w_g^\uparrow$ on $\T(f)$ coincides with $g$ up to an affine function. 
\end{lemma}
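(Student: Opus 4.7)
The plan is to verify the balancing condition at each $n-2$ cell of $\T(f)$ by a case analysis, using Lemma \ref{lem:covector.extensions} to transport the balancing already known for $(\T(g),w_g)$, and then to invoke the uniqueness part of Theorem \ref{thm:mikhalin} to identify the resulting function with $g$.

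I would begin with a structural observation that makes the definition of $w_g^\uparrow$ well behaved: under the hypothesis $\V(g)\subset\V(f)$, any $\sigma\in\T(f)_{n-1}$ either lies entirely in a single top cell $\sigma'\in\T(g)_{n-1}$ or meets $\V(g)$ only in codimension at least one inside $\sigma$. The reason is that on the relative interior of $\sigma$ exactly two linear pieces of $f$ agree, so $\V(f)$ is locally a single hyperplane; if $\sigma$ were to cross the relative boundary of some $\sigma'$ while staying inside it, a third piece of $g$ would tie at the crossing and $\V(g)$ would locally consist of three distinct hyperplanes, which cannot all lie in the single hyperplane of $\V(f)$.

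With this at hand, I would fix $\tau\in\T(f)_{n-2}$ and split according to where the relative interior of $\tau$ sits in $\T(g)$. If it lies outside $\V(g)$, every adjacent $\sigma$ has $w_g^\uparrow(\sigma)=0$ by the above, and the balancing sum is trivially zero. If it lies in the relative interior of some $\sigma'\in\T(g)_{n-1}$, then exactly two cells of $\T(f)$ adjacent to $\tau$ lie in $\sigma'$; they share the same affine span, carry the common weight $w_g(\sigma')$, and their primitive normals at $\tau$ are negatives of one another, while every other adjacent cell has weight zero. If it lies in the relative interior of a cell $\tau'\in\T(g)_{n-2}$, then $L_\Z(\tau)=L_\Z(\tau')$, and Lemma \ref{lem:covector.extensions} puts the adjacent $\sigma\subset\sigma'$ of positive extended weight in bijection with the top cells of $\T(g)$ through $\tau'$, with matching covectors; the balancing identity at $\tau'$ in $(\T(g),w_g)$ then transports verbatim to the balancing identity at $\tau$ modulo $L_\Z(\tau)$.

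For the moreover, I would apply Theorem \ref{thm:mikhalin} to the now balanced $(\T(f),w_g^\uparrow)$: it produces a tropical polynomial, unique up to an affine function, whose weighted tropical hypersurface is supported exactly on the cells of positive weight. By the structural observation, this support equals $\V(g)$, and the aggregate weight on each $\sigma'\in\T(g)_{n-1}$ is $w_g(\sigma')$, so the reconstructed function and $g$ define the same weighted balanced complex, hence agree up to an affine summand by the uniqueness part of Theorem \ref{thm:mikhalin}. The main obstacle I anticipate is the structural observation above: ruling out ``half-in, half-out'' top cells of $\T(f)$ is really where the hypothesis $\V(g)\subset\V(f)$ does the work, and keeping the polyhedral bookkeeping clean there is the real crux.
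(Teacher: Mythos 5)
Your verification of the balancing condition is essentially the paper's argument: the paper splits into $\tau\notin\T(g)_{n-2}$ and $\tau\in\T(g)_{n-2}$, pairing off positive-weight cells that sit in a common $\sigma'\in\T(g)_{n-1}$ and differ by a reflection along $\tau$ (so their covectors cancel by Lemma \ref{lem:covector.extensions}), and transporting the balancing of $(\T(g),w_g)$ in the second case; your three-way split merely separates out the trivial subcase where $\tau$ misses $\V(g)$, and your preliminary ``structural observation'' is the well-definedness of Definition \ref{def:extension} that the paper uses tacitly. Where you genuinely diverge is the ``moreover'' clause: the paper proves it by an explicit inductive wall-crossing construction (\ref{eqn:inductive.definition}), building $g'$ chamber by chamber with gradient jumps $w_g^\uparrow(\sigma)\,d_{f,\sigma}$ and observing that zero-weight walls produce no kink, whereas you invoke the uniqueness statement attached to Theorem \ref{thm:mikhalin}. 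Your route is shorter, but it silently uses the fact that the Mikhalkin reconstruction depends only on the positive-weight support together with its weights --- i.e., is insensitive to refining the complex by zero-weight cells --- which is exactly what the paper's explicit construction makes visible; moreover, that construction is not a detour, since it is reused verbatim in the proof of Theorem \ref{thm:main} to build $h$ from $w_f-w_g^\uparrow$. So your argument is correct and self-contained for the lemma, but it would leave you without the machinery the paper needs immediately afterwards.
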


\begin{proof} We prove that the complex is balanced. First assume that $\tau \in \T(f)_{n-2}\backslash \T(g)_{n-2}$. Let $\sigma_1, \ldots, \sigma_r\in \T(f)_{n-1}$ the the cells meeting $\tau$. Since $\tau\not\in \T(g)_{n-2}$, it must be that $\sigma_i\not \in \T(g)_{n-1}$ for any $i\in [r]$. Let $\sigma_i$ be a cell having non-zero weight with respect to $w_g^\uparrow$ and consider $\sigma'_i\in \T(g)_{n-1}$ with $\sigma_i\subset \sigma'_i$. There is a $\sigma_j\neq \sigma_i$ in $\T(f)_{n-1}$ with $\sigma_j\subset \sigma'_i$, for otherwise $\tau \in \T(g)$. By our definition we have $w_g^\uparrow(\sigma_i)= w_g^\uparrow(\sigma_j)$. Now, $\sigma_i$ and $\sigma_j$ have codimension zero in $\sigma'_i$ and differ by a reflection along $\tau$ in $\sigma'_i$. Thus by Lemma \ref{lem:covector.extensions} we must have $c_{f,\tau}(\sigma_i)=-c_{f,\tau}(\sigma_j)$ and hence the terms cancel. If follows that $\T(f)$ is balanced at $\tau$. Now choose $\tau \in \T(g)_{n-2}$. Let $\sigma_1,\ldots \sigma_r\in\T(g)_{n-1}\backslash \T(f)_{n-1}$ and $\sigma'_1,\ldots \sigma'_s\in\T(g)_{n-1}$ be cells meeting $\tau$. In the balancing condition the cells $\sigma_i$ contribute zero by the above consideration, and the cells $\sigma'_j$ contribute zero, since $\T(g)$ is balanced and the covectors coincide. Consequently $\T(f)$ is balanced as claimed. 

To define a convex, piecewise affine function $g':\R^n\to \R$ from the triplet $(\T(f), c_f, w_g^\uparrow)$, proceed inductively. Let $D_0$ be a component of $\R^n\backslash \V(f)$, and set $g'=0$ on $D_0$. Now let $D'$ be a component of $\R^n\backslash \V(f)$ next to $D$ where $g'$ is already defined, and let $\sigma=D\cap D'$ in $\T(f)_{n-1}$. Define \begin{equation}\label{eqn:inductive.definition} g'|_{D'}(x):= l_D(x) + w^\uparrow_g(\sigma)d_{f, \sigma}(D', x) + c ,\end{equation} where $l_D$ is the affine function extending $g'$ on $D$, $d_{f, \sigma}(D',x)$ is the unique linear functional represented by the primitive vector $d_{f, \sigma, D'}$ contained in $\Z^n$, orthogonal to $L_{\Z}(\sigma)$ pointing at $D'$, and $c$ is a constant chosen such that $g'|_{D'}$ agrees with $g'|_D$ on $\sigma$. The balancing condition ensures that this iterative procedure is well defined. Indeed, considering the components $D_i$ and the $n-1$ cells $\sigma_j$ meeting any $n-2$ cell $\tau$ modulo $L_{\R}(\tau)$, we see that the primitive vectors $d_{f, \sigma, D}$ and $c_{f, \tau, \sigma}$ differ by a rotation around $\tau$ by a right angle. 
Finally, observe that if $w_g^\uparrow (F)=0$, then $g'$ is affine on $D\cup D'$. It is easy to see that $g'$ coincides with $g$ up to the affine function extending $(g-g')|_{D_0}$.
\end{proof}

Weights are a measure of the convexity of a function at a point where it is not differentiable. By extending the weight function we make the degree of convexity of $f$ and $g$ comparable. Having developed the necessary terminology to make Theorem \ref{thm:main} precise, we proceed to give its proof. 

\begin{proof}[Proof of Theorem \ref{thm:main}]
Define the function $w_h^\uparrow:\T(f)_{n-1}\to\Z$ via $$w_h^\uparrow (\sigma) := w_f (\sigma) - w_g^\uparrow (\sigma)\quad \text{for all}~\sigma \in \T(f)_{n-1},$$ and suppose $w_f-w^\uparrow_g \geq 0$, i.e. $w^\uparrow_h$ is an extended weight function. Then one defines a convex function $h'$ via the inductive procedure in (\ref{eqn:inductive.definition}), unique up to an affine function. From the construction we see that this affine function can be chosen uniquely to obtain $h$ satisfying $f=g+h$. 
Conversely, suppose that the tropical division problem for the pair $(f,g)$ has a solution, and let $h$ be a the tropical polynomial solving $f= h\odot g$. Then $\T(h)$ is balanced with its weight function $w_h$. Extend the weight functions $w_g$ and $w_h$ to $w_g^\uparrow$ and $w_h^\uparrow$ defined on $\T(f)$ as in Definition \ref{def:extension}. Then by Lemma \ref{lem:extended.balanced} and the construction in (\ref{eqn:inductive.definition}) it follows that $w_h^\uparrow=w_f-w^\uparrow_g$. Moreover $w_h^\uparrow\geq 0$ by the definition of the extension and our assumption that $h$ is a tropical polynomial, i.e. $w_h$ turns $\V(h)\subset \V(f)$ into a weighted balanced complex. Thus $w_h^\uparrow$ is an extended weight function on $\mathcal T(f)$. 
\end{proof}

One would be inclined to think that subtracting extended weight functions for two balanced complexes (\ref{eqn:balancing.condition}) one would automatically obtain a balanced complex. However, if the weights obtained this way are negative, then this is equivalent to reversing the co-orientation locally, contradicting the choice of a globally coherent orientation. From the proof of Lemma \ref{lem:extended.balanced} we see that in this case the function constructed form the complex need not be convex. Thus it is crucial that the difference does not change sign once orientations have been fixed.

\begin{remark}[Extensions of Theorem \ref{thm:main}]\label{rem:generalization} In foresight of our extensions below, we remark that Theorem \ref{thm:main} can be generalized in several ways. 

Firstly, to the case of polyhedral complexes which are not rational. Formally this extension covers the case of `tropical polynomials' (\ref{eqn:tropical.polynomial}) with a finite set of \emph{real} exponents $A\subset \R^n$ . While this may appear unnatural from the point of view of tropical geometry, it will be useful for the factorization of polytopes which do not have lattice realizations in Section \ref{sec:minkowski.generalized.permutahedra}. To this end, let us be given an inner product on $\R^{n}$ with associated norm $\|\cdot \|$. Define the \emph{covector} similarly as in Section \ref{sec:prelim.weighted.complexes}, but instead let $c_{\Sigma, \tau, \sigma}$ to be the vector of unit length with respect to $\|\cdot \|$ contained in $L_{\R}(\sigma)$ which is orthogonal to $L_{\R}(\tau)$ and points at $\sigma$. The weight function in this case is given by the $\|\cdot \|$ length of the edge dual to $\sigma$ in $\Delta_f$. Finally the balancing condition in (\ref{eqn:balancing.condition}) is required to hold for all $x\in \R^n$, and modulo $L_{\R}(\tau)$. The proofs of this section continue to be true up to obvious modifications of the notation. 

Secondly, to the case of tropical varieties arising from polynomials over more general fields and polyhedral complexes which are rational with respect to the value group, by following the more algebraic approach of Maclagan and Sturmfels \cite[Sect.\ 3.3]{maclagan2015introduction}.
\end{remark}

\subsection{Examples on the Division Problem}
Here we consider examples with $n=2$. In the first the division problem has a solution, in the second it does not. In all figures heavy black lines indicate the tropical variety. Dotted lines indicate the cells in $\T(f)$ not contained in $\V(g)$ or $\V(h)$.

	\begin{figure}[h]
    \centering
    \begin{subfigure}[t]{0.33\textwidth}
        \centering
        \begin{tikzpicture}[scale=0.5]
\draw [line width=0.25mm, dotted](0,10) -- (10,0);

\draw [line width=0.25mm](0,7) -- (3,7) -- (3,10);
\draw [line width=0.25mm](3,7) -- (7,3);
\draw [line width=0.25mm](7,0) -- (7,3) -- (10, 3);
\node[below left] at (1.5,7) {$\sigma^g_1$};
\node at (1.5,8.5) {$\sigma^g_2$};
\node[above right] at (3,8.5) {$\sigma^g_3$};
\node[below left] at (5,5) {$\sigma^g_4$};
\node[right] at (3,7.2) {$\tau$};

\draw[line width=0.1mm] (0,0) rectangle (10,10);

\end{tikzpicture}
\caption{Refinement $\T(g)\subset \T(f)$}
    \end{subfigure}%
    ~ 
    \begin{subfigure}[t]{0.33\textwidth}
        \centering
        \begin{tikzpicture}[scale=0.5]

\draw [line width=0.25mm, dotted](0,7) -- (3,7) -- (3,10);
\draw [line width=0.25mm](0,10) -- (10,0);
\draw [line width=0.25mm, dotted](7,0) -- (7,3) -- (10, 3);
\node[below left] at (1.5,7) {$\sigma^h_1$};
\node at (1.5,8.5) {$\sigma^h_2$};
\node[above right] at (3,8.5) {$\sigma^h_3$};
\node[below left] at (5,5) {$\sigma^h_4$};
\node[right] at (3,7.2) {$\tau$};

\draw[line width=0.1mm] (0,0) rectangle (10,10);
\end{tikzpicture}
        \caption{Refinement $\T(h)\subset \T(f)$}
    \end{subfigure}~
    \begin{subfigure}[t]{0.33\textwidth}
        \centering
        \begin{tikzpicture}[scale=0.5]


\draw [line width=0.25mm](0,7) -- (3,7) -- (3,10);
\draw [line width=0.25mm](0,10) -- (10,0);
\draw [line width=0.25mm](7,0) -- (7,3) -- (10, 3);
\node[below left] at (1.5,7) {$\sigma^f_1$};
\node at (1.5,8.5) {$\sigma^f_2$};
\node[above right] at (3,8.5) {$\sigma^f_3$};
\node[below left] at (5,5) {$\sigma^f_4$};
\node[right] at (3,7.2) {$\tau$};

\draw[line width=0.1mm] (0,0) rectangle (10,10);

\end{tikzpicture}
\caption{$\T(f)$}
\end{subfigure}
\caption{The division problem has a solution. Figure accompanies Example \ref{ex:division.problem.sol}.}\label{fig:division.problem.sol}
\end{figure}
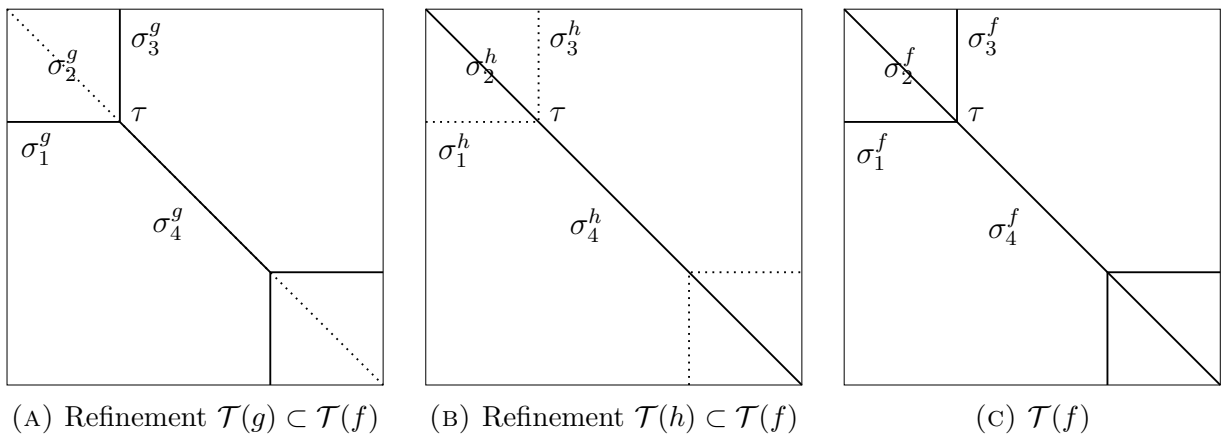

\begin{example}\label{ex:division.problem.sol}
In Figure \ref{fig:division.problem.sol}, $A=\{ (0,0), (0,1), (1,0), (1,2), (2,1), (2,2) \}$ and $v_{(0,0)} =0$ and the other coefficients are chosen appropriately. $B=\{ (0,0), (0,1), (1,0), (1,1) \}$ and $u_{(0,0)} =0$. One computes that $h$ is convex with lifted points in the tropical polynomial $(0,0)$ and $(1,1)$.

Let us now compute the covectors and weights. We have $$c_{g,\tau}(\sigma^g_1, x) = (-1,0)\cdot x, \quad c_{g, \tau}(\sigma_3^g, x) = (0,1)\cdot x,  \quad c_{g,\tau}(\sigma^g_4, x) = (1,-1)\cdot x$$ and $w_g(\sigma^g_1) =w_g(\sigma^g_3)=w_g(\sigma^g_4)=1$. The complex $\T(g)$ is balanced as $\tau$, as expected. Moreover, $$c_{f, \tau}(\sigma^g_1, x) = (-1,0)\cdot x,~~  c_{f, \tau}(\sigma^f_2, x) = (-1,1)\cdot x, ~~ c_f(\sigma^f_3, x) = (0,1)\cdot x, ~~  c_{f,\tau}(\sigma^f_4, x) = (1,-1)\cdot x$$ and $w_f(\sigma^f_1) =w_f(\sigma^f_2)=w_f(\sigma^f_3)=1$ and $w_f(\sigma^f_4)=2$. We now extend $w_g$ by adding $w_g^\uparrow(\sigma^g_2)=0$, and verify $w_f-w_g \geq 0$. Indeed, from the refined polyhedral complex $\T(h)$ we compute $c_{h, \tau}= c_{f, \tau}$. Moreover $w_h^\uparrow(\sigma^h_1)= w_h^\uparrow(\sigma^h_3) =0$, so $h$ is smooth on $\sigma^h_1$ and $\sigma^g_3$, and with the remaining weights $w_h^\uparrow(\sigma^h_2)=w_h^\uparrow(\sigma^h_4)=1$ the complex is balanced at the zero cell $\tau$.
\end{example}

	\begin{figure}[h]
    \centering
    \begin{subfigure}[t]{0.33\textwidth}
        \centering
        \begin{tikzpicture}[scale=0.5]

\draw [line width=0.25mm](0,0) -- (10,10);
\draw [line width=0.25mm](10,0) -- (0,10);

\draw [line width=0.25mm, dotted](3,3) -- (3,7) -- (7,7) -- (7,3) -- (3, 3);

\node[below] at (1.2,8.5) {$\sigma^g_1$};

\draw[line width=0.1mm] (0,0) rectangle (10,10);

\end{tikzpicture}
\caption{$\T(g)$}
    \end{subfigure}%
    ~ 
    \begin{subfigure}[t]{0.33\textwidth}
        \centering
        \begin{tikzpicture}[scale=0.5]

\draw [line width=0.25mm, dotted](0,0) -- (10,10);
\draw [line width=0.25mm, dotted](10,0) -- (0,10);

\draw [line width=0.25mm](3,3) -- (3,7) -- (7,7) -- (7,3) -- (3, 3);
\draw [line width=0.25mm](3,3) -- (7,7);
\draw [line width=0.25mm](3,7) -- (7,3);

\draw[line width=0.1mm] (0,0) rectangle (10,10);
\end{tikzpicture}
        \caption{corner locus of $f-g$}
    \end{subfigure}~
    \begin{subfigure}[t]{0.33\textwidth}
        \centering
        \begin{tikzpicture}[scale=0.5]


\draw [line width=0.25mm](0,0) -- (10,10);
\draw [line width=0.25mm](10,0) -- (0,10);

\draw [line width=0.25mm](3,3) -- (3,7) -- (7,7) -- (7,3) -- (3, 3);

\node[below] at (1.2,8.5) {$\sigma^f_1$};
\node[left] at (3,5) {$\sigma^f_2$};
\node[above] at (5,7) {$\sigma^f_3$};
\node[above right] at (3.3,5) {$\sigma^f_4$};
\node[above] at (3,7) {$\tau$};
\draw[line width=0.1mm] (0,0) rectangle (10,10);

\end{tikzpicture}
        \caption{$\T(f)$}
    \end{subfigure}
\caption{The division problem does not have a solution. Figure to Example \ref{ex:division.problem.nosol}.}\label{fig:division.problem.nosol}
\end{figure}
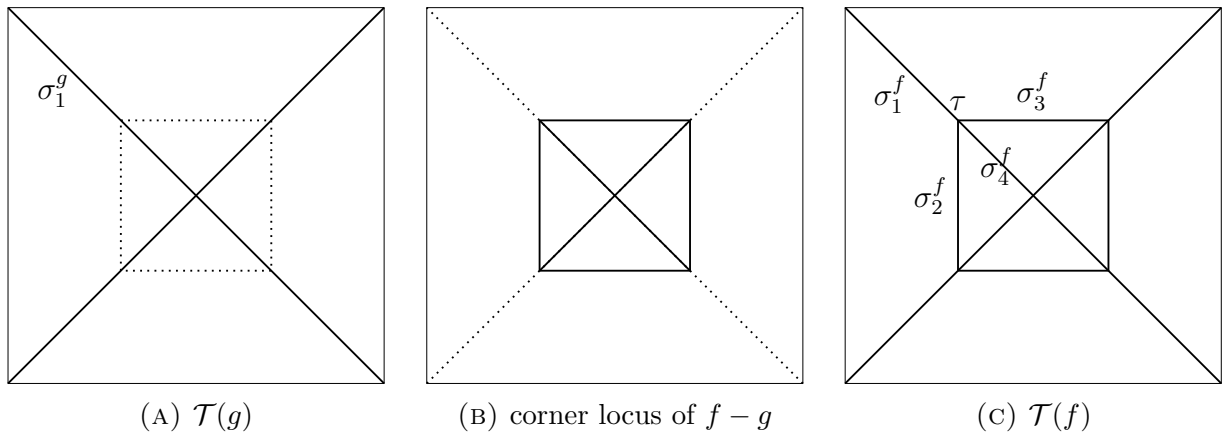

\begin{example}\label{ex:division.problem.nosol}
In Figure \ref{fig:division.problem.nosol} we have $B=\{(0,2), (2,0), (-2,0), (0,-2)\} $ and coefficients $u(b)=0$ for all $b\in B$. Then $c_{g, \tau}(\sigma^g_1, x) = (-1,1)\cdot x$, and $w_g(\sigma^g_1)=2$. One easily calculates the other covectors and finds that all weights are equal to 2. Now we define $f$ via $A=\{(0,2), (2,0), (-2,0), (0,-2), (0,1), (1,0), (0, -1), (-1, 0) \}$ and coefficients $v((0,2))=v((2,0))=v((-2,0))=v((0,-2))=0$ and $v((0,1))=v((1,0))=v((-1,0))=v((0,-1))=1$. We find that $$c_{f,\tau}(\sigma^f_1, x) =(-1, 1)\cdot x, c_{f,\tau}(\sigma^f_2, x) =(0, -1)\cdot x, c_{f,\tau}(\sigma^f_3, x) =(1, 0)\cdot x, c_{f,\tau}(\sigma^f_4, x) =(1, -1)\cdot x,$$ with weights given by $$w_f(\sigma^f_1)=2,  w_f(\sigma^f_2)=1,  w_f(\sigma^f_3)=1,  w_f(\sigma^f_4)=1.$$ We see that the complex is balanced at the zero cell $\tau$. Now we extend $w_g$ to $\T(f)$ to obtain $$w_g^\uparrow(\sigma^f_1)=~2,   w_g^\uparrow(\sigma^f_2)= 0,w_g^\uparrow(\sigma^f_3)= 0, w_g^\uparrow(\sigma^f_4)= 2.$$ However, $w_f(F_4)-w_g^\uparrow(F_4)=1-2 < 0$. Hence, as depicted in panel (c), the function $f-g$ is not convex, but a tent. 
\end{example}

\section{Minkowski Factorization of Polytopes}\label{sec:minkowski.factorization}

In this section we will specialize Theorem \ref{thm:main} and apply it to the Minkowski factorization of polytopes. 
A lattice polytope $P$ can be viewed as a tropical polynomial $f_P$ with exponents corresponding to vertices of $P$, $A:=\text{vert}(P)$ and constant coefficients $v_a=0$in (\ref{eqn:tropical.polynomial}). In this case the tropical hypersurface of $f_P$ coincides with the codimension 1 skeleton of the normal fan of $P$,  and the Newton polytope of $f_P$ is $\text{Newt}(f_P)= \text{conv}(P)$ with trivial subdivision, e.g.\ \cite{maclagan2015introduction, joswig2014essentials}. From the identity $\text{Newt}(f_P)=\text{Newt}(f_R \odot f_Q) = \text{Newt}(f_R)+\text{Newt}(f_Q)$ we see that the factorization of lattice polytopes $P=Q+R$ in the Minkowski sense can be understood as a special case of the tropical division problem.

\begin{definition}
Let $\mathcal F$ be a pointed fan in $\R^n$ pure of dimension $n$ and $w:\mathcal F_{n-1}\to \N$ a weight function. We call the pair $(\mathcal F, w)$ a \emph{weighted fan}, and say that it is \emph{balanced} if the balancing condition (\ref{eqn:balancing.condition}) holds. 
\end{definition}

Note that the weight function $w_P$ on $\mathcal T(f_P)$ as defined in Section \ref{sec:prelim.weighted.complexes} can be calculated from the vertices and edges of $P$ alone. For our purposes, working with $\T(f_P)$ or $\mathcal N(P)$ is equivalent, since we may view the normal fan $\mathcal N(P)$ as weighted by $w_P$. Note that if $\mathcal N$ is a fan that refines $\mathcal N(P)$, then we may extend the weight function as specified in Definition \ref{def:extension}.

\begin{proposition}\label{prop:polytope.factorization}
Let $P, Q$ be lattice polytopes and denote by $\mathcal N(P)$ and $\mathcal N(Q)$ their balanced normal fans. There exists $R$ such that $P= R+Q$ if and only if $\mathcal N(P)$ refines $\mathcal N(Q)$ and $$w_P(F)-w^\uparrow_Q(F) \geq 0$$ for all $n-1$ dimensional cones $F$ in $\mathcal N(P)$. In this case the polytope $R$ is the Newton polytope of the tropical polynomial $f_P-f_Q$.
\end{proposition}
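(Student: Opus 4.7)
The plan is to deduce Proposition \ref{prop:polytope.factorization} as a specialization of Theorem \ref{thm:main} via the dictionary that sends a lattice polytope $P$ to the tropical polynomial $f_P(x) = \max_{v \in \text{vert}(P)} v\cdot x$, which is precisely the support function of $P$. Under this correspondence, Minkowski addition translates into tropical multiplication: $P = R + Q$ holds as a Minkowski identity if and only if $f_P = f_R + f_Q$ holds pointwise, that is, $f_P = f_R \odot f_Q$ in the max-plus semi-ring.

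My first step is to verify this dictionary at the level of the geometric data entering Theorem \ref{thm:main}. Because $f_P$ has all coefficients zero, the non-smooth locus $\V(f_P)$ coincides with the codimension-one skeleton of $\mathcal N(P)$, and $\T(f_P)$ is identified with this skeleton equipped with the weights $w_P$ obtained from the edges of $P$. Under this identification, the containment $\V(f_Q)\subset \V(f_P)$ becomes the statement that $\mathcal N(P)$ refines $\mathcal N(Q)$, since for complete pointed fans the inclusion of codimension-one skeletons is equivalent to refinement. Once refinement holds, the extended weight function $w_Q^\uparrow$ of Definition \ref{def:extension} is well-defined on the $(n-1)$-cones of $\mathcal N(P)$.

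With these translations in place, the forward direction of the proposition is immediate: if $P = R + Q$, then $f_P = f_Q \odot f_R$ solves the tropical division problem for $(f_P, f_Q)$, and Theorem \ref{thm:main} gives both $\V(f_Q)\subset \V(f_P)$ and $w_P - w_Q^\uparrow = w_R^\uparrow \geq 0$. Conversely, assuming the refinement condition and the weight inequality, Theorem \ref{thm:main} produces a tropical polynomial $h$ with $f_P = f_Q \odot h$; it remains to upgrade $h$ to a support function $f_R$ for a lattice polytope $R$ and to identify this $R$ with the Newton polytope of $f_P - f_Q$.

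The main obstacle is showing that $h$ is in fact a support function, i.e.\ that it admits a representation as a tropical polynomial with zero coefficients, and that the resulting polytope is a lattice polytope. Since $f_P$ and $f_Q$ are both positively homogeneous of degree one, so is their difference $h$, and any convex, positively homogeneous, piecewise linear function on $\R^n$ is the support function of a polytope, explicitly $R = \{\,y \in \R^n : y \cdot x \leq h(x) \text{ for all } x \in \R^n\,\}$. On each chamber of $\mathcal N(P)$ the function $h$ is the difference of an integer linear piece of $f_P$ and one of $f_Q$, so its slopes are integer vectors; these slopes are exactly the vertices of $R$, so $R$ is a lattice polytope and $R = \text{Newt}(h)$, completing the proof.
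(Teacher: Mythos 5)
Your proof is correct and follows essentially the same route as the paper: both deduce the proposition by specializing Theorem \ref{thm:main} through the dictionary sending a lattice polytope to its support function, under which Minkowski sums become tropical products, the tropical hypersurface of $f_P$ becomes the codimension-one skeleton of $\mathcal N(P)$, and the containment $\V(f_Q)\subset\V(f_P)$ becomes refinement of normal fans. The only cosmetic difference is in identifying $R$: you recover it directly as the polytope supported by the positively homogeneous convex function $h=f_P-f_Q$ and check latticeness via its integer slopes, whereas the paper invokes the Newton polytope identity $\mathrm{Newt}(r\odot f_Q)=\mathrm{Newt}(r)+\mathrm{Newt}(f_Q)$; both steps are routine and equivalent.
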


\begin{proof} The only claim which is not a direct consequence of Theorem \ref{thm:main} and the preceding discussion is the last part. Let $r:=f_P-f_Q$, which by Theorem \ref{thm:main} is a tropical polynomial. From the identity $\text{Newt}(r\odot f_Q)= \text{Newt}(r) + \text{Newt}(f_Q)$, and $P=\text{Newt}(f_P)$ and $Q=\text{Newt}(f_Q)$ we get the claim. 
\end{proof}

We can use the machinery we developed to give a short proof of the following known result. 
\begin{proposition}[{\cite[Thm.\ 2.6]{deza2018diameter}}]
Let $P$ and $Q$ be lattice polytopes. Then there is a $c\in \mathbb Q_{\geq 0}$ and a lattice polytope $R$ such that $P=c\cdot Q + R$  if and only if $P$ and $P+Q$ have the same number of vertices. 	
\end{proposition}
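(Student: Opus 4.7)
The plan is to translate the vertex-count condition into a statement about refinements of normal fans, and then invoke Proposition~\ref{prop:polytope.factorization}. The starting observation is that the number of vertices of a polytope equals the number of maximal cones in its normal fan, and the normal fan of a Minkowski sum is the common refinement, $\mathcal{N}(P+Q)=\mathcal{N}(P)\wedge \mathcal{N}(Q)$. Since this common refinement refines $\mathcal{N}(P)$, the equality $|\mathrm{vert}(P)|=|\mathrm{vert}(P+Q)|$ is equivalent to $\mathcal{N}(P)\wedge \mathcal{N}(Q)=\mathcal{N}(P)$, i.e., to $\mathcal{N}(P)$ already refining $\mathcal{N}(Q)$. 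This reformulation collapses the combinatorial side of the equivalence to a statement about normal fans, to which the machinery of Proposition~\ref{prop:polytope.factorization} directly applies.

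For the forward direction, I would argue: if $P=cQ+R$ with $c\in\mathbb{Q}_{>0}$ and $R$ a lattice polytope, then $\mathcal{N}(cQ)=\mathcal{N}(Q)$ (normal fans are invariant under positive scaling), and the Minkowski-sum identity yields $\mathcal{N}(P)=\mathcal{N}(Q)\wedge \mathcal{N}(R)$, which refines $\mathcal{N}(Q)$. By the reformulation above, this gives $|\mathrm{vert}(P)|=|\mathrm{vert}(P+Q)|$.

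For the converse, assume $\mathcal{N}(P)$ refines $\mathcal{N}(Q)$. By Proposition~\ref{prop:polytope.factorization}, a decomposition $P=cQ+R$ exists whenever $w_P(F)\geq c\, w_Q^\uparrow(F)$ for every $(n-1)$-dimensional cone $F$ of $\mathcal{N}(P)$, and for $c>0$ rational sufficiently small this inequality holds (since $w_P(F)\geq 1$ on the relevant cones and $w_Q^\uparrow$ is a fixed finite-valued function). This produces a polytope $R$ satisfying the desired equation, but \emph{a priori} only as a rational polytope.

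The main obstacle is therefore upgrading $R$ to a lattice polytope. My plan is to use a scaling argument: pick an integer $N$ large enough that $N\, w_P\geq w_Q^\uparrow$ pointwise, and apply Proposition~\ref{prop:polytope.factorization} to the pair $(NP,Q)$ of lattice polytopes. This gives a lattice polytope $S$ with $NP=Q+S$, so that $P=(1/N)\,Q+(1/N)\,S$, giving $c=1/N$ and $R=(1/N)\,S$. The remaining step is to ensure the latter is lattice; this is arranged by choosing $N$ compatibly with the common divisors of the coordinates of the vertices of $S$ (which themselves are differences of vertices of $NP$ and $Q$), possibly after iterating with a different integer scaling. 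The degenerate situation, where no such positive $c$ is available, is covered by the trivial choice $c=0$, $R=P$ permitted in the statement.
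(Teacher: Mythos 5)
Your reduction of the vertex-count condition to the statement that $\mathcal N(P)$ refines $\mathcal N(Q)$ is correct, and your forward direction (for $c>0$) is fine; in spirit this matches the paper's argument, which also works with the weight functions $w_P$ and $w_Q^\uparrow$ on a common fan and invokes Proposition \ref{prop:polytope.factorization}. The genuine gap is exactly the step you flag at the end: upgrading $(1/N)S$ to a lattice polytope. No choice of $N$, and no amount of iterating, can arrange this in general. Take $P=[0,1]^2$ and $Q=[0,2]\times[0,1]$. Then $\mathcal N(P)=\mathcal N(Q)$, so $P$ and $P+Q$ both have four vertices, and for $N\geq 2$ your construction gives $S=[0,N-2]\times[0,N-1]$, so $(1/N)S$ is never a lattice polytope. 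Worse, every Minkowski decomposition of the square $P$ as $cQ+R$ forces $R$ to be a translate of $[0,1-2c]\times[0,1-c]$, which is a lattice polytope only when $c=0$; so for this pair there is no $c\in\mathbb Q_{>0}$ with lattice $R$ at all, and the obstruction cannot be engineered away by a cleverer choice of scaling. Your fallback $c=0$, $R=P$ does make the existence statement true, but it simultaneously destroys the ``only if'' direction, since $P=0\cdot Q+P$ holds for every pair and carries no information about vertex counts; you cannot use $c=0$ as an escape hatch in one direction while excluding it in the other.

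The root of the trouble is the integrality of $R$, which your argument is right to worry about but cannot resolve, because the statement read literally is not a correct equivalence when $R$ is required to be a lattice polytope and $c$ is a single rational scalar. The result is naturally stated either with $R$ an arbitrary (rational) polytope, or in the cleared-denominator form $NP=Q+R$ with $N$ a positive integer and $R$ lattice --- and your scaling argument proves precisely this latter version cleanly, since there $w_{NP}-w_Q^\uparrow=Nw_P-w_Q^\uparrow\geq 0$ for $N$ large and Proposition \ref{prop:polytope.factorization} hands you a lattice $R$ directly. For comparison, the paper's own proof stays entirely at the level of balanced weight functions on the normal fan of $P+Q$ and never addresses the integrality of the difference polytope either; it is silent on the very point where your write-up is, to its credit, explicit about the difficulty.
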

\begin{proof}
Let $\mathcal N$ be the normal fan of $P+Q$. Since both $P$ and $P+Q$ are polytopes, the fan $\mathcal N$ weighted with $w_{P+Q}$ and $w_P^\uparrow$ must be balanced. Moreover $P+Q$ implies that $w_{P+Q}\geq w_P^\uparrow$, so $w:= w_{P+Q}- w_P^\uparrow\geq 0$ is a proper weight function on $\mathcal N$ and must be balanced. We must thus show $w_Q^\uparrow =w$ if and only if $P$ and $P+Q$ have the same number of vertices. If $P$ and $P+Q$ have the same number of vertices, then each $n-1$ cone in $\mathcal N$ must carry a strictly positive weight under $w_P^\uparrow$. In this case Proposition \ref{prop:polytope.factorization} implies $w=w^\uparrow_Q$. For the converse, suppose $w_P-w_Q\geq 0$ on the normal fan of $P$, $\mathcal N(P)$. In this case any $n-1$ cone of $\mathcal N(P)$ carries a strictly positive weight. Moreover $w_P+w^\uparrow_Q$ is a balanced weight function which also gives strictly positive weights to each $n-1$ cone. By Proposition \ref{prop:polytope.factorization}, it is the weight function of $P+Q$, which must therefore have an equal number of vertices as $P$. 
\end{proof}

\subsection{Balanced Coarsenings} Signed Minkowsi sums are intricate, for instance, they do not commute  \cite{postnikov2009permutohedra, lin2017linear}. Importantly, on the level of polyhedral fans, extended weight functions can be added and subtracted commutatively while respecting the non-negativity constraints imposed by Proposition \ref{prop:polytope.factorization}. Consequently we are able to restore to a certain degree an Abelian nature of signed Minkowski sums. Traditionally the study via support functions has played a similar role for Minkowski factorization. This is also the direction explored e.g.\ in \cite[Prop.\ 13]{lin2017linear} and \cite{ardila2019coxeter}, and more generally in \cite[Sec.\ 2.5 and Thm.\ 9.5.6]{de2010triangulations} Here we emphasize a different view by considering \emph{weighted fans}. We now develop some terminology that will be useful in the following examples. 

\begin{definition}[Balanced Coarsening] Let $(\mathcal N_1, w_1)$ and $(\mathcal N_2, w_2)$ be balanced fans and suppose that $\mathcal N_1$ refines $\mathcal N_2$. 
\begin{enumerate}
\item We call $\mathcal N_2$ a \emph{strict balanced coarsening} of $\mathcal N_1$ if $w_1-w_2^\uparrow\geq 0$ with strict inequality for at least one $n-1$ cone. In this case we write $\mathcal N_2\succ_b \mathcal N_1$.
\item We call $(\mathcal N_2, w_2)$ a \emph{minimal balanced coarsening} if there does not exist another strict balanced coarsening $(\mathcal N', w')$ of $\mathcal N_1$ such that $w_1, w_2\neq w'$ up to multiplication by a constant and $\mathcal N_2\succ_b \mathcal N'\succ_b \mathcal N_1$.
\end{enumerate}
\end{definition}

We shall call a Polytope which cannot be factorized into two different polytopes up to scaling \emph{indecomposable}. Deciding decomposability of polytopes has  important applications in algebra, algebraic geometry and theoretical computer science, e.g.\ \cite{ostrowski1975multiplication,  shephard1963decomposable, smilansky1987decomposability, gao2001decomposition, gao2001absolute, gritzmann1993minkowski}. For systematic development of polytope algebra, see \cite{mcmullen1989polytope}.

\begin{definition}[Minimal and Maximal Summands]
Let $P$ be a lattice polytope and $R$ a Minkowski summand. We call $R$ a \emph{(Minkowski) minimal summand} if it is indecomposable. We call $R'$ a \emph{(Minkowski) maximal summand} if $P=R'+R$ and $R$ is a minimal summand. 
\end{definition}
 
If $P=R'+R$ then $R'$ is maximal if and only if $R$ is minimal. Indeed if $P=R'+R$ and $R$ is not minimal, then $R=S+S'$ with $S$ indecomposable. Hence $P=S+(R'+S')$, and $R'$ is not maximal.
\begin{corollary}\label{cor:bijection} Let $P$ be a lattice polytope. The maximal Minkowski summands of $P$ are in bijection with the minimal balanced coarsenings of $\mathcal N(P)$.
\end{corollary}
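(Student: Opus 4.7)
My proposal is to set up the bijection by sending a maximal summand $R'$, with $P = R' + R$ and $R$ indecomposable, to the weighted fan $(\mathcal{N}(R'), w_{R'})$, viewed as a balanced coarsening of $(\mathcal{N}(P), w_P)$. Throughout I identify lattice polytopes that differ by translation. The additivity of weights under Minkowski sums, $w_{Q_1+Q_2}^{\uparrow} = w_{Q_1}^{\uparrow} + w_{Q_2}^{\uparrow}$ on the common refinement $\mathcal{N}(Q_1) \wedge \mathcal{N}(Q_2)$ (and any finer fan), will be used repeatedly; it follows directly from the definition of extended weights together with the fact that the edge lengths of a Minkowski sum add along each normal direction.

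For well-definedness, if $R'$ is maximal then Proposition \ref{prop:polytope.factorization} yields $w_P - w_{R'}^{\uparrow} = w_R^{\uparrow} \geq 0$ with strict inequality somewhere, so $(\mathcal{N}(R'), w_{R'})$ is at least a strict balanced coarsening. To prove minimality, suppose an intermediate $(\mathcal{N}', w')$ existed with $\mathcal{N}(R') \succ_b \mathcal{N}' \succ_b \mathcal{N}(P)$, with $w'$ not a scalar multiple of either end. Then $w' - w_{R'}^{\uparrow}$ and $w_P - (w')^{\uparrow}$ are non-negative, non-zero, balanced weight functions on $\mathcal{N}(P)$. By Theorem \ref{thm:mikhalin} (combined with the construction of Proposition \ref{prop:polytope.factorization} that recovers the lattice polytope as a Newton polytope), each determines a lattice polytope, call them $S$ and $T$, and additivity then forces $R = S + T$ with both summands non-trivial, contradicting indecomposability of $R$.

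Injectivity is immediate because a lattice polytope is determined by its weighted normal fan up to translation. For surjectivity, given a minimal balanced coarsening $(\mathcal{N}_2, w_2)$, Theorem \ref{thm:mikhalin} realizes it as the weighted normal fan of some lattice polytope $R'$, and Proposition \ref{prop:polytope.factorization} delivers $R$ with $P = R' + R$. If $R$ were decomposable as $R = S + T$ non-trivially, the polytope $R' + S$ would give an intermediate weighted fan $\mathcal{N}(R') \wedge \mathcal{N}(S)$ with weights $w_{R'}^{\uparrow} + w_S^{\uparrow}$, and both differences $w_{R'+S} - w_{R'}^{\uparrow} = w_S^{\uparrow}$ and $w_P - w_{R'+S}^{\uparrow} = w_T^{\uparrow}$ would be non-zero and non-negative, contradicting the minimality of $(\mathcal{N}_2, w_2)$. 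Hence $R$ is indecomposable and $R'$ is a maximal summand. The main obstacle is less conceptual than notational: one must track the extensions $w^{\uparrow}$ consistently across the refinement lattice generated by $\mathcal{N}(R'), \mathcal{N}(R), \mathcal{N}(S), \mathcal{N}(T)$, and interpret the ``up to a positive scalar / up to translation'' identifications on both sides so that degenerate cases (such as the trivial decomposition of $P$ by a point) correspond only to each other.
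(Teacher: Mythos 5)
Your proposal is correct and follows the route the paper intends: the paper states this corollary without an explicit proof, treating it as immediate from Proposition \ref{prop:polytope.factorization} (summand $\leftrightarrow$ non-negative balanced weight function) together with the preceding remark that $R'$ is maximal precisely when its complementary summand $R$ is minimal, and your write-up --- the map $R'\mapsto(\mathcal N(R'),w_{R'})$, additivity of extended weights, and the translation of intermediate coarsenings into intermediate summands via Theorem \ref{thm:mikhalin} --- is exactly that argument made explicit. The only point requiring the care you already flag at the end is that the two ``up to scaling'' conventions (in the definition of an indecomposable polytope and in the definition of a minimal balanced coarsening) must be matched consistently, so that a decomposition of $R$ into homothets of itself does not produce a spurious intermediate coarsening in the minimality step.
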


Since $Q=\{0\}$ is a Minkowski summand of any polytope, we can define the \emph{maximal factorizations} to be those decomposing $P$ into maximal chains of minimal balanced coarsenings.

\begin{corollary}
Let $P$ and $Q$ are lattice polytopes. Suppose $\mathcal N(Q) \succ_b \mathcal N(P)$. Any sequence of weighted refinements $\mathcal N(Q)=\mathcal N_0 \succ_b \mathcal N_1 \succ_b \ldots  \mathcal N_{k-1} \succ_b \mathcal N_k =\mathcal N(P)$ corresponds exactly to a sequence $(S_0, S_1 , \ldots, S_{k-1}, S_k)$ of lattice polytopes where $S_0 =P$, $S_k=Q$ and $S_i$ is an maximal Minkowski summand of $S_{i-1}$.
\end{corollary}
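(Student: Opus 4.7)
The plan is to iterate Corollary~\ref{cor:bijection} along the chain, reading it from the fine end $\mathcal N_k=\mathcal N(P)$ back toward the coarse end $\mathcal N_0=\mathcal N(Q)$ and building up the polytope sequence $S_0=P, S_1, \ldots, S_k=Q$ in parallel, by induction on the length $k$ of the chain.

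For the forward direction, I would assume inductively that $S_{i-1}$ has been produced as a lattice polytope with $\mathcal N(S_{i-1})=\mathcal N_{k-i+1}$ and weight function $w_{S_{i-1}}=w_{k-i+1}$. The relation $\mathcal N_{k-i}\succ_b \mathcal N_{k-i+1}$ says precisely that $\mathcal N_{k-i+1}$ refines $\mathcal N_{k-i}$ and that the extended weight difference $w_{S_{i-1}}-w_{k-i}^\uparrow$ is non-negative (and strictly positive on at least one top cone). Theorem~\ref{thm:mikhalin} applied to the integrally balanced weighted fan $(\mathcal N_{k-i},w_{k-i})$ produces a lattice polytope $S_i$ with $\mathcal N(S_i)=\mathcal N_{k-i}$, and then Proposition~\ref{prop:polytope.factorization} (with $P:=S_{i-1}$, $Q:=S_i$) yields a complementary lattice polytope $R_i$ satisfying $S_{i-1}=S_i+R_i$. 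Reading each step of the chain as a minimal balanced coarsening (which is the content required by the word \emph{maximal} in the conclusion), Corollary~\ref{cor:bijection} forces $R_i$ to be indecomposable, so $S_i$ is a maximal Minkowski summand of $S_{i-1}$. The recursion terminates at $i=k$ with $\mathcal N(S_k)=\mathcal N_0=\mathcal N(Q)$; fixing translates once and for all at the start of the recursion identifies $S_k=Q$.

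For the converse direction, a chain $P=S_0, S_1, \ldots, S_k=Q$ of maximal Minkowski summand extractions supplies decompositions $S_{i-1}=S_i+R_i$ with $R_i$ indecomposable, so the normal fans $\mathcal N_i := \mathcal N(S_{k-i})$ form a chain in which each step $\mathcal N(S_i)\succ_b \mathcal N(S_{i-1})$ is a minimal balanced coarsening: Proposition~\ref{prop:polytope.factorization} produces the coarsening, and Corollary~\ref{cor:bijection} translates indecomposability of $R_i$ into minimality of the coarsening. Concatenation assembles these into the desired sequence of weighted refinements from $\mathcal N(Q)$ up to $\mathcal N(P)$.

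The main obstacle is purely bookkeeping: keeping straight the reversal of direction (fine-to-coarse on the fan side versus $P$-to-$Q$ on the polytope side) and verifying that each polytope constructed along the way is a \emph{lattice} rather than merely rational polytope. The latter is automatic because all weights $w_j$ and their extended versions in the chain take integer values on top cones, so each invocation of Theorem~\ref{thm:mikhalin} returns an integrally balanced fan whose associated polytope has integer vertices. No new idea beyond the single-step bijection of Corollary~\ref{cor:bijection} is required; the present statement is simply its chain-wise iteration.
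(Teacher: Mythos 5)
The paper states this corollary without any proof, treating it as the chain-wise iteration of Corollary \ref{cor:bijection}, and your argument supplies exactly that iteration (one application of Proposition \ref{prop:polytope.factorization} together with Theorem \ref{thm:mikhalin} per link of the chain), so it is correct and matches the intended route. Your observation that the conclusion ``$S_i$ is a maximal summand'' forces each $\succ_b$ step to be read as a \emph{minimal} balanced coarsening is a legitimate and necessary repair of the statement's literal wording, which the paper glosses over.
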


\subsection{Polytope Factorization Bases} \label{sec:minkowski.factorization.bases} To get a canonical decomposition we must develop an appropriate notion of bases. Let $\mathcal N$ be the normal fan of some lattice polytope. Fix some $n-2$ cone $A$ and denote by $C(A)=\{F_1, \ldots, F_l \}$ the set of $n-1$ cones $F$ in $\mathcal N$ such that $A\subset F$. If we denote by $c_A(F, \cdot)$ the covector functional, we obtain the  linear map $$\lambda \mapsto \phi^A(\lambda) = \sum_{F\in C(A)}\lambda_F c_A(F, \cdot) \in \text{hom}_{\Z}(\Z^n/L_{\Z}(A), \mathbb Z). $$
Now let $m$ be the number of $n-1$ cones in $\mathcal N$ and set 
\begin{equation}\label{eqn:weight.kernel}
 W(\mathcal N) =  \bigcap_{A \in \mathcal N_{n-2}}\ker_{\mathbb Z}\phi^A\cap \mathbb N_{0}^m
 \end{equation}

\begin{definition}\label{def:factorization.basis} A \emph{$\mathcal N$ Polytope Factorization Basis} is the set of polytopes $\mathcal B(\mathcal N)= \{B_1, \ldots, B_r \}$ associated via Proposition \ref{prop:polytope.factorization} to a lattice basis $ B(\mathcal N)=\{b_1, \ldots, b_r \}$ for the linear span of $W (\mathcal N)$ over $\mathbb Z$ which consists of non-negative vectors $b_i$.
\end{definition}
The fact that such a basis always exists follows from the proof below. We will exhibit two specific bases in the following sections. Having developed the necessary terminology to make Proposition \ref{prop:polytope.unique.factorization} rigorous, let us stride to its proof.
\begin{proof}[Proof of Proposition \ref{prop:polytope.unique.factorization}]
	First observe that $B(\mathcal N)$ is non-empty. Indeed, since $\mathcal N$ is polytopal there is some weight function  such that any $n-1$ cone of  $\mathcal N$ carries a strictly positive weight. This implies that $\bigcap_{A \in \mathcal N_{n-2}}\ker_{\mathbb Z}\phi^A$ contains some vector all of whose coordinates are strictly positive. In particular we conclude from this that there exists some basis of the kernel contained in $\N^m$. Fix such a basis of primitive vectors and call it $B(\mathcal N) = \{b_1, \ldots, b_r \}$. By Proposition \ref{prop:polytope.factorization} each $b_i$ corresponds to a unique polytope $B_i$ whose normal fan is refined by $\mathcal N$. Denote this collection of polytopes $\mathcal B(\mathcal N)= \{B_1, \ldots, B_r \}$. Now let $Q$ be any lattice polytope such that the normal fan $\mathcal N_Q$ is refined by $\mathcal N$. Extend the weight function $w_Q$ on $\mathcal N_Q$ to $w^\uparrow_Q$ on $\mathcal N$. Since this turns $\mathcal N$ into a balanced fan by Proposition \ref{lem:extended.balanced}, it must be that $w^\uparrow_Q \in W(\mathcal N)$ and thus there is a unique expansion $$w^\uparrow_Q = \sum_{i_1}^m y_i b_i$$ with  $y_i\in \mathbb Z$. From Proposition \ref{prop:polytope.factorization} we conclude that $$Q + \sum_{B\in \mathcal B(\mathcal N)}y_i^- B_i = \sum_{B\in \mathcal B(\mathcal N)}y_i^+ B_i $$ which is the unique representation of $Q$ we sought.\end{proof}

Let us remark that Proposition \ref{prop:polytope.unique.factorization} can be phrased in terms of polytopes $\mathcal P= \{P_1, \ldots, P_k \}$ instead of normal fans. Indeed, in this case $P=\sum_i P_i$ is a polytope and its normal fan $\mathcal N_P$ carries a positive weight on each $n-1$ cone. In particular, each $P_i$ can be uniquely factorized with respect to $\mathcal B(\mathcal N_P)$.

The factorization of polytopes can also be understood using abstract order theory for vector spaces. Since $W(\mathcal N)$ is a cone, it gives rise to a partial order $\succ$ for polytopes, as follows. For polytopes  $P, Q$ with normal fans refined by $\mathcal N$, we write  $P\succ Q$ if and only if $w^\uparrow_P-w^\uparrow_Q \in \mathcal W(\mathcal N)$. Reflecting upon Proposition \ref{prop:polytope.factorization}, we see that this is the very definition of signed Minkowski sums from the introduction and the basic insight that underlies the proof. Uniqueness of such a representation in our apparatus only uses simple facts from linear algebra. In particular we easily see that all polytope factorization bases have the same cardinality.

\subsection{Examples on Minkowski Factorization}
We first present an example of a polytope which does not admit a non-trivial factorization, i.e. an indecomposable  polytope. The second example is a polytope which admits only one factorization. Finally we give an example of a polytope which admits three different maximal factorizations, one of which is of maximal length. We conclude this section by giving a polytope factorization basis with respect to which all of these examples have unique factorization.

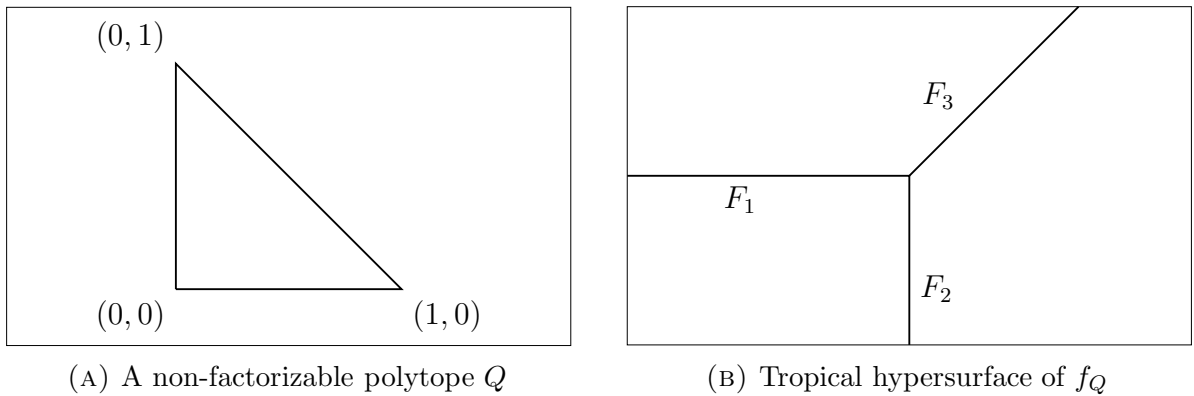
\begin{figure}[h!]
    \centering
\begin{subfigure}[t]{0.45\textwidth}
\centering
\begin{tikzpicture}[scale=0.75]

\draw [line width=0.25mm](3,1) -- (7,1) -- (3,5) -- (3,1);

\node[below left] at (3,1) {$(0,0)$};
\node[below right] at (7,1) {$(1,0)$};
\node[above left] at (3,5) {$(0,1)$};

\draw[line width=0.1mm] (0,0) rectangle (10,6);
\end{tikzpicture}
\caption{A non-factorizable polytope $Q$}
\end{subfigure}
    \hspace{0.5cm}
\begin{subfigure}[t]{0.45\textwidth}
\centering
\begin{tikzpicture}[scale=0.75]

\draw [line width=0.25mm](5,3) -- (8,6);
\draw [line width=0.25mm](0,3) -- (5,3) -- (5,0);
\node[below] at (2,3) {$F_1$};
\node[right] at (5,1) {$F_2$};
\node[above left] at (6,4) {$F_3$};

\draw[line width=0.1mm] (0,0) rectangle (10,6);
\end{tikzpicture}
        \caption{Tropical hypersurface of $f_Q$}
    \end{subfigure}
\caption{Non-factorizable polytope. Figure accompanies Example \ref{ex:polytope.factorization.1}.}\label{fig:indecomposable.polytope}
\end{figure}

\begin{example}[Indecomposable polytope]\label{ex:polytope.factorization.1}
Here $V(Q)=A=\{ (0,0), (0,1), (1,0)\}$ and $v =0$. We define $f_Q$ as in Section \ref{sec:tropical.polynomials}. The polyhedral complex $\T(f_Q)$ is the normal fan of $Q$ in Figure \ref{fig:indecomposable.polytope} with covector function $$c_Q(F_1)=(-1,0), \quad c_Q(F_2)=(0, -1), \quad c_Q(F_3)=(1, 1),  $$ which becomes balanced with weight  function $$w_Q(F_1)= 1, \quad w_Q(F_2)= 1, \quad w_Q(F_3)= 1.$$ Clearly the only extended weight functions on $\T(f_Q)_{1}$ which are balanced are multiples of $w_Q$ or $w=0$. Thus there is no non-trivial Minkowski summand contained in $Q$, in other words, $Q$ is indecomposable. 
\end{example}

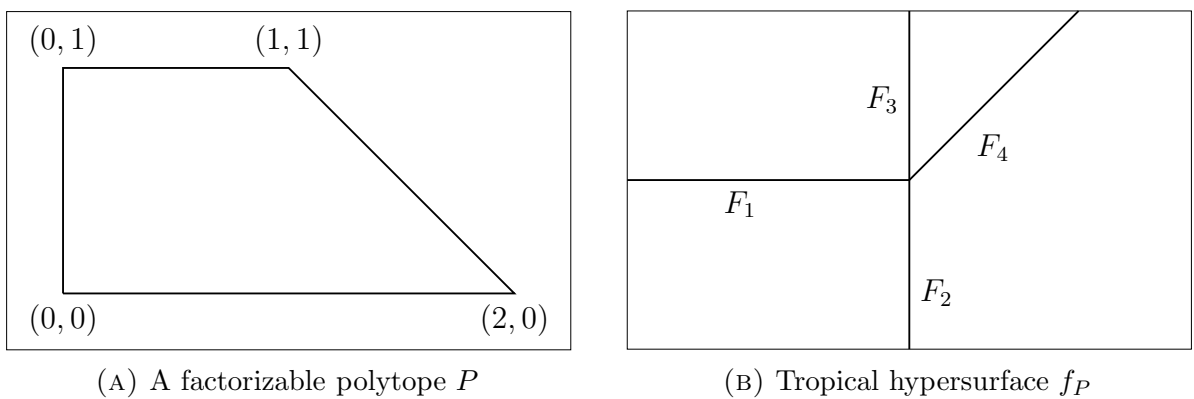
\begin{figure}[h!]
    \centering
\begin{subfigure}[t]{0.45\textwidth}
\centering
\begin{tikzpicture}[scale=0.75]

\draw [line width=0.25mm](1,1) -- (9,1) -- (5,5) -- (1,5) -- (1,1);

\node[below] at (1,1) {$(0,0)$};
\node[below] at (9,1) {$(2,0)$};
\node[above] at (1,5) {$(0,1)$};
\node[above] at (5,5) {$(1,1)$};

\draw[line width=0.1mm] (0,0) rectangle (10,6);
\end{tikzpicture}
\caption{A factorizable polytope $P$}
\end{subfigure}
    \hspace{0.5cm}
\begin{subfigure}[t]{0.45\textwidth}
\centering
\begin{tikzpicture}[scale=0.75]

\draw [line width=0.25mm](5,3) -- (8,6);
\draw [line width=0.25mm](5,3) -- (5,6);
\draw [line width=0.25mm](0,3) -- (5,3) -- (5,0);
\node[below] at (2,3) {$F_1$};
\node[right] at (5,1) {$F_2$};
\node[above left] at (5,4) {$F_3$};
\node[below right] at (6,4) {$F_4$};

\draw[line width=0.1mm] (0,0) rectangle (10,6);
\end{tikzpicture}
        \caption{Tropical hypersurface $f_P$}
    \end{subfigure}
\caption{Uniquely factorizable polytope. Figure to Example \ref{ex:polytope.factorization.2}.}
\end{figure}

\begin{example}[Uniquely factorizable polytope]\label{ex:polytope.factorization.2}
Here $V(P)=A=\{ (0,0), (0,1), (2,0), (1,1)\}$ and $v =0$. The polyhedral complex $\T(f_P)$ is the normal fan of $P$ with covector function $$c_P(F_1)=(-1, 0), \quad c_P(F_2)=(0, -1), \quad c_P(F_3)=(0, 1), \quad c_P(F_4)=(1, 1), $$ which becomes balanced with weight function $$w_P(F_1)= 1, \quad w_P(F_2)= 2, \quad w_P(F_3)= 1, \quad w_P(F_4)= 1.$$ 
Now consider the extended weight function $$w_R(F_1)= 0, \quad w_R(F_2)= 1, \quad w_R(F_3)= 1, \quad w_R(F_4)= 0,$$ which can be seen as the extension of the weight function of $R=conv(\{(0,0), (1,0)\})$. Then $w_R+w_Q =w_P$, and $P=Q+R$. The extended weight-functions $w_R, w_Q$ are the only weight functions which turn $\T(f_P)$ into a balanced complex and satisfy $w_Q, w_R\leq w_P$. Here the weight functions $w_P-w_R$ and $w_P-w_Q$ correspond to minimal balanced coarsenings.
\end{example}

\begin{example}[Uniquely factorizable polytope, continued]
Consider a similar example with $A=\{(0,0), (0,1), (2,1), (3,0)\}$ and $P'=\text{conv}(A)$. Then $P'=Q+S$, $S= \text{conv}(\{(0,0), (2,0)\})$. However, $S=R+R$ where $R=\text{conv}(\{(0,0), (1,0)\})$. Note that $w_S=2\cdot w_R$ from the previous example. Thus the weight functions $w_{P'}-w_S$ and $w_{P'}-w_Q$ correspond to minimal coarsening which correspond to the maximal summands $S$ and $Q$ of $P'$.
\end{example}

\begin{figure}[h!]
    \centering
\begin{subfigure}[t]{0.45\textwidth}
\centering
\begin{tikzpicture}[scale=0.75]

\draw [line width=0.25mm](4,1) -- (3,2) -- (3,4) -- (4,5) -- (6,5) -- (7,4) -- (7,2) -- (6,1) -- (4,1);

\node[below] at (4,1) {$(1,0)$};
\node[left] at (3,2) {$(0,1)$};
\node[left] at (3,4) {$(0,2)$};
\node[above] at (4,5) {$(1,3)$};
\node[below] at (6,1) {$(2,0)$};
\node[right] at (7,2) {$(3,1)$};
\node[right] at (7,4) {$(3,2)$};
\node[above] at (6,5) {$(2,3)$};

\draw[line width=0.1mm] (0,0) rectangle (10,6);
\end{tikzpicture}
\caption{A factorizable polytope $S$}
\end{subfigure}
    \hspace{0.5cm}
\begin{subfigure}[t]{0.45\textwidth}
\centering
\begin{tikzpicture}[scale=0.75]

\draw [line width=0.25mm](0,3) -- (10,3);
\draw [line width=0.25mm](5,0) -- (5,6);
\draw [line width=0.25mm](2,0) -- (8,6);
\draw [line width=0.25mm](8,0) -- (2,6);

\node[below] at (2,3) {$F_1$};
\node[below left] at (3,5) {$F_2$};
\node[left] at (5,5) {$F_3$};
\node[left] at (7,5) {$F_4$};
\node[above] at (8,3) {$F_5$};
\node[below left] at (7,2) {$F_6$};
\node[right] at (5,1) {$F_7$};
\node[below right] at (3,1) {$F_8$};

\draw[line width=0.1mm] (0,0) rectangle (10,6);
\end{tikzpicture}
        \caption{Tropical hypersurface $f_S$}
    \end{subfigure}
\caption{A polytope with many factorizations. Figure to Example \ref{ex:non.unique}.}\label{fig:normal.fan.basis}
\end{figure}
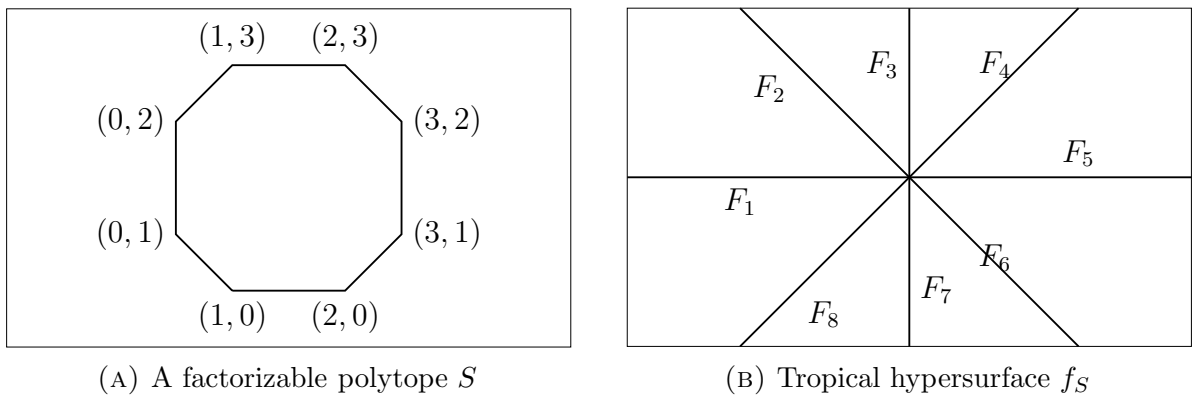

\begin{example}[A decomposable polytope with many factorizations.]\label{ex:non.unique}
In this example  $$V(S)=A=\{ (1,0), (0,1), (2,0), (0,2), (3,1), (3,2), (2,3), (1,3)\}$$ and $v =0$. The polyhedral complex $\T(f_S)$ is the normal fan of $S$ with covector function \begin{align*}
c_S(F_1)=(-1, 0), \quad c_S(F_2)=(-1, 1), \quad c_S(F_3)=(0, 1), \quad c_S(F_4)=(1, 1), 
\\
c_S(F_5)=(1, 0), \quad c_S(F_6)=(1, -1), \quad c_S(F_7)=(-1, 0), \quad c_S(F_8)=(-1, -1), \end{align*} 
which becomes balanced with weight function $w_S(F_i)=1$ for $i=1, \ldots, 8$.

We can now consider several other weight-functions which sum to $w_S$ and turn $\T(f_S)$ into a balanced complex, which correspond to the following Minkowski factorizations of $S$:
$$S=\text{conv}((1,0), (0,1)) + \text{conv}((0,0), (0,1)) + \text{conv}((0,0), (1,0)) + \text{conv}((0,0), (1,1))$$
which is a factorization of maximal length, and 
$$S=\text{conv}((0,0), (0,1) , (1,0)) + \text{conv}((1,0), (0,1), (1,1)) + \text{conv}((0,0), (1,1))$$
$$S=\text{conv}((0,0), (1,1) , (0,1)) + \text{conv}((0,0), (1,0), (1,1)) + \text{conv}((1,0), (0,1)) $$
all of which are factorizations into indecomposable polytopes. Thus all correspond to minimal coarsenings. Note that scaling the polytope $S$ preserves our construction.
A polytope factorization basis that resolves this ambiguity is given in the following example.
\end{example}

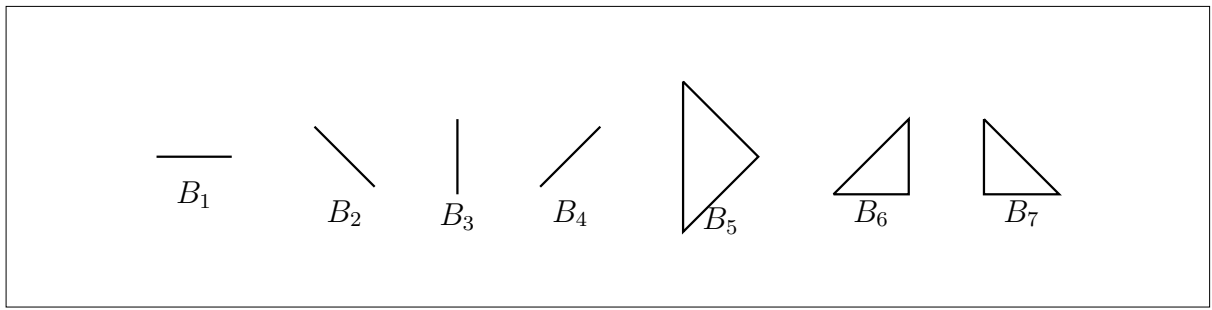
\begin{figure}[h!]
\begin{tikzpicture}[scale=1]
\draw [line width=0.3mm](1,2) -- (2,2);
\node at (1.5,1.5) {$B_1$};
\draw [line width=0.3mm](3.1,2.4) -- (3.9,1.6);
\node at (3.5,1.25) {$B_2$};
\draw [line width=0.3mm](5,2.5) -- (5,1.5);
\node at (5,1.2) {$B_3$};
\draw [line width=0.3mm](6.9,2.4) -- (6.1,1.6);
\node at (6.5,1.25) {$B_4$};
\draw [line width=0.3mm](8,3) -- (8,1) -- (9,2)--(8,3);
\node at (8.5,1.15) {$B_5$};
\draw [line width=0.3mm](10,1.5) -- (11,1.5) -- (11,2.5) --(10, 1.5);
\node at (10.5,1.25) {$B_6$};
\draw [line width=0.3mm](12,2.5) -- (12,1.5) --(13,1.5)--(12,2.5);
\node at (12.5,1.25) {$B_7$};
\draw[line width=0.1mm] (-1,0) rectangle (15,4);
\end{tikzpicture}
\caption{A polytope factorization basis $\mathcal B(\mathcal N_S)$. Figure to Example \ref{ex:basis}.}\label{fig:basis}
\end{figure}

\begin{figure}[h!]
    \centering
\begin{subfigure}[t]{0.45\textwidth}
\centering
\begin{tikzpicture}[scale=0.75]

\draw [line width=0.3mm](2,4.5) -- (8,4.5) -- (5,1.5)--(2,4.5);

\draw[line width=0.1mm] (0,0) rectangle (10,6);
\end{tikzpicture}
\caption{A $\mathcal N$ polytope $P_1$}
\end{subfigure}
    \hspace{0.5cm}
\begin{subfigure}[t]{0.45\textwidth}
\centering
\begin{tikzpicture}[scale=0.75]
  
\draw [line width=0.3mm](2,1.5) -- (8,1.5) -- (5,4.5)--(2,1.5);

\draw[line width=0.1mm] (0,0) rectangle (10,6);
\end{tikzpicture}
        \caption{A $\mathcal N$ polytope $P_2$}
    \end{subfigure}
\caption{Two $\mathcal N$-Polytopes which are linear combinations of the polytopes in the factorization basis $\mathcal B(\mathcal N) =\{B_1, \ldots, B_7\}$ of Example \ref{ex:basis}.}\label{fig:expansion.basis.2}
\end{figure}
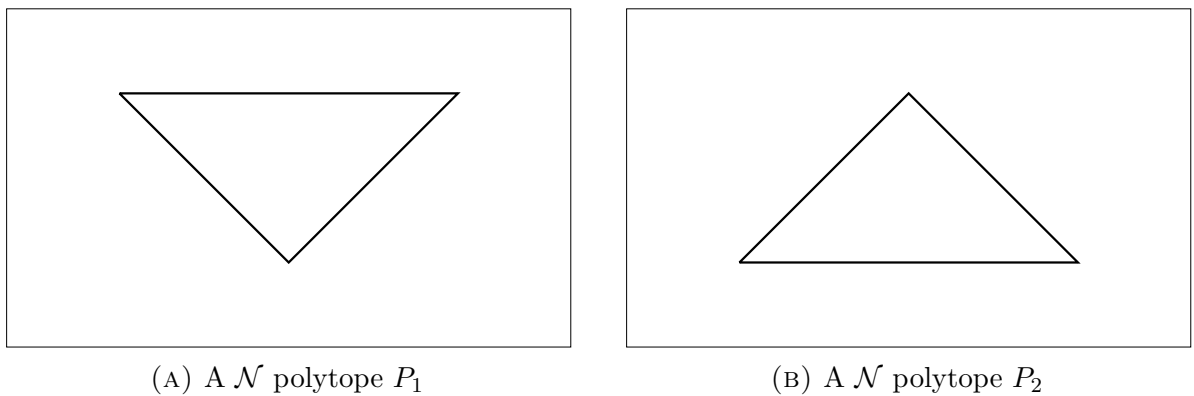

\begin{example}[Polytope Factorization Basis]\label{ex:basis}
Consider again the normal fan $\mathcal N_S=\T(f_S)$ in Figure~\ref{fig:normal.fan.basis}, with covectors given in Example \ref{ex:non.unique}. A basis for $W(\mathcal N_S)$ consisting of coordinate wise non-negative vectors $B(\mathcal N_S)$ is given by the columns of the following matrix
$$\begin{bmatrix}
    b_1 \\ b_2 \\ b_3\\b_4\\b_5\\b_6\\b_7
\end{bmatrix}=\begin{bmatrix}
    1 & 0 & 0 & 0 & 1 & 0 & 0 & 0 \\
    0 & 1 & 0 & 0 & 0 & 1 & 0 & 0 \\
    0 & 0 & 1 & 0 & 0 & 0 & 1 & 0 \\
    0 & 0 & 0 & 1 & 0 & 0 & 0 & 1 \\
    1 & 0 & 0 & 2  & 0 & 0 & 1 & 0 \\
    0 & 0 & 1 & 1 & 0 & 1 & 0 & 0 \\
    1 & 0 & 0 & 1 & 0 & 1 & 0 & 0 \\
    
\end{bmatrix}$$
Each basis vector corresponds to an indecomposable polytope in $\mathcal B(\mathcal N_S)$: the first 4 basis vectors correspond to 4 line segments, the last three basis vectors correspond to three triangles given in Figure \ref{fig:basis}.  It is clear that Examples \ref{ex:polytope.factorization.1} and \ref{ex:polytope.factorization.2} possess unique factorization with respect to $\mathcal B(\mathcal N_S)$. For Example \ref{ex:non.unique}, we have $S= B_1+B_2+B_3+B_4$. Note that all other factorizations in Example \ref{ex:non.unique} are linear combinations of this basis. For instance, in the third factorization   $\text{conv}((0,0), (1,1) , (0,1)) + \text{conv}((0,0), (1,0), (1,1))= B_1+B_2+B_3$ 
\end{example}

\begin{example}[Basis Expansions]\label{ex:basis.expansion}
Consider now the two polytopes in Figure \ref{fig:expansion.basis.2}, whose normal fans are refined by $\mathcal N_S$. The first polytope possesses the unique $\mathcal B(\mathcal N_S)$ basis expansion $$P_1= 2 B_1 + B_2 + B_3 + B_4  - B_6-B_7,$$ and the second unique expansion is obtained via $$P_2 = 2 B_1 +  B_2 + B_4 - P_1 = B_6+ B_7-B_3. $$ \end{example}

\section{Weighted Minkowski Representation of Generalized Permutahedra}\label{sec:factorization.generalized.permutahedra.A}
In the following we want to consider the factorization of a special class of lattice polytopes.
\begin{definition}A lattice polytope in $\R^{n+1}$ such that each edge is parallel to some vector $e_i-e_j$ for $i\neq j$ is called a \emph{Generalized Permutahedron}.
\end{definition}

Generalized permutahedra as defined in \cite[Def.\ 6.1]{postnikov2009permutohedra} nest many important classes of polytopes. For instance, they are the $M$-convex sets of \cite{murota2003discrete}, include the classical permutahedron, polymatroids, associaheda  \cite{postnikov2009permutohedra, fomin2005root}, and matroid basis polytopes. Here we study the Minkowski factorization of this class of polytopes and provide an answer to Problem \ref{prob:polymatroid.factorization} from the introduction. 

We remark once more that a polytope factorization basis in the sense of Definition \ref{def:factorization.basis} for generalized permutahedra  is given by $\{\triangle_I:I\subset [n+1] \}$, \emph{cf}. \cite{ardila2010matroid}. The proof of \cite{postnikov2009permutohedra, aguiar2017hopf, ardila2010matroid} haevily relies on the characterization of generalized permutahedra via submodular functions. In Section \ref{sec:coxeter.weight.matrix} we develop a general method for calculating such bases.

\begin{definition} The unweighted fan $$\U :=  \bigwedge_{\substack{I\subset [n+1]\\|I|=2}} \mathcal N(\triangle_I)/\R\cdot (1, \ldots, 1) \subset \R^{n}$$ is called the \emph{type $A_n$ universal fan}.
\end{definition}
Note that the $n-1$ skeleton of the normal fan of any weighted sum of the geometric simplex and its faces is always refined by the $n-1$ skeleton of the universal fan. In the following we will need to develop the necessary notation to label the $n-1$ dimensional cones of the universal fan. Our approach is based on \cite{billera1994combinatorics,stanley2004introduction,ziegler2012lectures}.

\subsection{Cones of the Universal Fan}\label{sec:cones.universal.fan}

For $I\subset [n+1]$ denote by $G_I$ the graph on vertex set $[n+1]$ with edge $\{i, j \}$ if and only if $i\neq j$ are in $I$.  Write $$\mathcal A(G_I) = \{x\in \R^{n+1}~:~x_i-x_j =0,~\{i,j\}\subset I, i<j \}/\R\cdot (1, \ldots, 1)$$ for the associated graphical arrangements, and $\mathcal F(G_I)$ for the fan generated by $\mathcal A(G_I)$. For the complete graph $G_{[n+1]}$, $\mathcal A(G_{[n+1]})$ is the Braid arrangement modulo its linearliy space. Since $\triangle_I \hookrightarrow \R^{n+1}$ we have that $\mathcal N(\triangle_I) /\R$ is refined by $\mathcal F(G_{[n+1]})$.

Let us develop notation to index the $n-1$ dimensional cones of the fan generated by  the graphic arrangement $\mathcal A(G_{[n+1]})$, and its sub-arrangements. For $I=\{i_1, \ldots, i_k\}\subset [n+1]$ we denote by $\Pi_I$ the set or ordered partitions of $I$ into $k-1$ parts, i.e. vectors $(A_1, \ldots, A_{k-1})$ consisting of non-empty and pairwise disjoint subsets $A_j\subset I$, such that $A_i\cup ~\ldots ~\cup A_{k-1}=I$. Evidently exactly one $A_j$ contains two elements, while all others are singleton sets. 

The $n-1$ dimensional cones of the fan $\mathcal F(G_I)$ are in bijection with $\Pi_I$, as follows. Let $C$ be such a cone, and $x$ a point in its relative interior. Then 
\begin{equation}\label{eqn:permutation}  x_{\pi(i_1)}>x_{\pi(i_2)}>\ldots > x_{\pi(i_{j-1})} > x_{\pi(i_j)} = x_{\pi(i_{j+1})}> \ldots >x_{\pi(i_k)}\end{equation} 
for some permutation $\pi$ on $I$. Indeed, since $C$ has dimension $n-1$ it is contained in some hyperplane $\{x~:~x_{i_r}-x_{i_s}=0\}$. Moreover, no other such equality can hold for otherwise $C$ would have dimension at most $n-2$. It follows that (\ref{eqn:permutation}) is valid, and the ordered partition $(\{\pi(i_1)\}, \ldots,\{\pi(i_j), \pi(i_{j+1})\},\ldots, \{\pi(i_k)\})$ is in $\Pi_I$. We shall denote this partition by $\pi(C)$. Conversely, any $\pi\in \Pi_I$ determines a cone of dimension $n-1$ by virtue of $$\{x\in \R^{n+1}~:~  x_{\pi(i_1)}>x_{\pi(i_2)}>\ldots > x_{\pi(i_{j-1})} > x_{\pi(i_j)} = x_{\pi(i_{j+1})}> \ldots >x_{\pi(i_k)}\}.$$ This cone modulo $\R\cdot (1, \ldots, 1)$ is  in $\mathcal F(G_I)$ and will be denoted by $C(\pi)$. 

In particular, in case $I=[n+1]$ the $n-1$ dimensional cones of the universal fan are indexed by $\Pi_{[n+1]}$. 

\begin{definition}Let $I\subset [n+1]$ and $\pi\in \Pi_{[n+1]}$. We say that $\pi$ \emph{restricts} to $\Pi_I$, if the ordered partition $\pi|_I$ obtained by deleting the coordinates which contain elements that are not in $I$, is an element of $\Pi_I$.
\end{definition}

In the following we identify which $n-1$ dimensional cones of the universal fan refine those of the normal fan of the simplex and its faces. To this end let $I=\{i_1, \ldots, i_k\}\subset [n+1]$  and consider the normal fan $\mathcal N(\triangle_I)/\R\cdot (1, \ldots, 1)\subset \R^{n}$. Any closed $n-1$ dimensional cone in the normal fan is of the form $$C_{rs}= \{x\in\R^{n+1}/\R\cdot(1, \ldots, 1)~:~ x_{i_r}=x_{i_s}, x_{i_r}\geq x_{i_t}~\text{for all }~t=1, \ldots, k~\text{and}~ t\neq r,s \},$$ for some pair $s\neq r$ with $s, r\in I$. We thus have that the associated ordered partition $\pi(C_{rs})\in \Pi_I$ has as first coordinate the set $\{i_r, i_s\}$, followed by some permutation of $I\backslash \{i_r, i_s\}$.

\subsection{Extension of Weights and Characterization Result}\label{sec:weight.matrtix.A}

For each $I\subset [n+1]$ we use Definition \ref{def:extension} to extend  the weight function $w_I:=w_{\triangle_I}$ on $\mathcal N(\triangle_I)/\R\cdot (1, \ldots, 1)$ to the $n-1$ dimensional cones of $\mathcal U$, as follows. Let $C\in\U_{n-1}$, then $$w_I^\uparrow(C)= \begin{cases}1 & \text{if}~ C\subset C'\in \mathcal N(\triangle_I)/\R\cdot (1, \ldots, 1)_{n-1}\\ 0 &\text{otherwise}. \end{cases}$$
Instead of cones we can define the weight function on the ordered partitions $\Pi_{[n+1]}$, giving a more convenient combinatorial description.

\begin{proposition}[Characterization of the weights]\label{prop:w.matrix} 
Let $\pi$ be in $\Pi_{[n+1]}$ and $I\subset [n+1]$. The extended weight function on ordered partitions $w^\uparrow:\Pi_{[n+1]}\mapsto \{0,1\}$ is given by the following:
 $w_I^\uparrow(\pi) = 1$ if and only if $\pi$ restricts to $\Pi_I$ and the first coordinate of $\pi|_I$ is a two element set.

\end{proposition}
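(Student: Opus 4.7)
The plan is to translate the geometric condition defining $w_I^\uparrow$ directly into the combinatorics of ordered partitions, using the bijection between $\Pi_{[n+1]}$ and the $(n-1)$-dimensional cones of $\mathcal{U}$ established in Section \ref{sec:cones.universal.fan}. By Definition \ref{def:extension}, $w_I^\uparrow(C(\pi))=1$ if and only if $C(\pi)$ is contained in some $(n-1)$-dimensional cone of $\mathcal{N}(\triangle_I)/\mathbb{R}\cdot(1,\ldots,1)$, and is zero otherwise. From the description preceding Section \ref{sec:weight.matrtix.A}, these cones are precisely the $C_{rs}$ for ordered pairs $r\ne s$ in $I$, so the task reduces to identifying, for a given $\pi\in\Pi_{[n+1]}$, for which (if any) pair $\{r,s\}\subset I$ one has $C(\pi)\subset C_{rs}$.

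I would proceed by picking a point $x$ in the relative interior of $C(\pi)$. From equation (\ref{eqn:permutation}), the coordinates of $x$ realize a strict total order on $[n+1]$ except that the two coordinates indexed by the unique two-element block of $\pi$ coincide. Because $C(\pi)$ is the closure of its relative interior and $C_{rs}$ is closed, the containment $C(\pi)\subset C_{rs}$ is equivalent to $x$ satisfying both defining conditions of $C_{rs}$, namely $x_r=x_s$ together with $x_r\ge x_t$ for every $t\in I$. The equality forces $\{r,s\}$ to be exactly the unique two-element block of $\pi$, and in particular forces this block to lie inside $I$, which is the statement that $\pi$ restricts to $\Pi_I$. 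The weak inequality forces $\{r,s\}$ to precede every other element of $I$ in the ordering of $\pi$; after deleting from each block the entries outside $I$, this is precisely the statement that the first coordinate of $\pi|_I$ is a two-element set.

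Conversely, granting those two combinatorial conditions, the unique two-element block of $\pi$ determines a pair $\{r,s\}\subset I$ with the correct maximality property among $I$-indexed coordinates, and every $x\in C(\pi)$ then satisfies the defining conditions of $C_{rs}$, so $C(\pi)\subset C_{rs}$. Since the pair $\{r,s\}$ is uniquely determined by $\pi$ when it exists, $w_I^\uparrow$ is well defined on $\Pi_{[n+1]}$ with values in $\{0,1\}$, matching the claim. No genuine obstacle is expected here: the argument is essentially bookkeeping, with the only mild subtlety being to verify that the passage to the quotient by $\mathbb{R}\cdot(1,\ldots,1)$ respects both the equality and the weak inequality, which is immediate since both are invariant under adding a multiple of $(1,\ldots,1)$ to a representative of $x$.
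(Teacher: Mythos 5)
Your proposal is correct and follows essentially the same route as the paper: both arguments reduce the computation of $w_I^\uparrow$ via Definition \ref{def:extension} to deciding when $C(\pi)$ is contained in one of the cones $C_{rs}$ of $\mathcal N(\triangle_I)/\R\cdot(1,\ldots,1)$ described at the end of Section \ref{sec:cones.universal.fan}, and then read off the two combinatorial conditions from the defining (in)equalities of $C_{rs}$. Your version merely spells out the point-in-relative-interior bookkeeping that the paper's proof leaves implicit.
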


\begin{proof} Fix $I$ and let $C$ be an $n-1$ dimensional cone in $\mathcal U$. The restriction $\pi_I(C)$ of $\pi(C)$  encodes the position of the points in $C$ relative to the fan $\mathcal F(G_I)$. A cone in $\mathcal F(G_I)$ of dimension $n-1$ is in $\mathcal N(\triangle_I)/ \R\cdot (1, \ldots, 1)$ if and only if the first coordinate of the associated permutation contains two elements, which are a subset of $I$. Hence $C$ is contained in a normal cone if and only if $\pi_I(C)$ is in $\Pi_I$ and the first coordinate is a subset of $I$ containing two elements. 
\end{proof}

Evidently, in the case $|I|=2$, the normal fan $\mathcal N(\triangle_I)/\R\cdot (1, \ldots, 1)$ contains exactly one $n-1$ dimensional cone given by $H_{ij}=\{x\in \R^{n+1}~:~ x_i -x_j=0\}/\R\cdot (1, \ldots, 1)$ for $i\neq  j $. Then, by definition, $C\subset H_{ij}$ if and only if $I$ appears as a coordinate in $\pi(C)$, i.e.  $\pi_I(C)=I$.

We have now developed the necessary terminology to make Theorem  \ref{thm:fractorization.m.convex} precise. In particular this  addresses Problem \ref{prob:polymatroid.factorization} from the introduction.

\begin{proof}[Proof of Theorem \ref{thm:fractorization.m.convex}] Use Proposition \ref{prop:w.matrix} to identify each $n-1$ cone $C$ of $\mathcal U$ with $\pi \in \Pi_{[n+1]}$. Then the sum (\ref{eqn:polymartoid.condition}) defines non-negative extended  weights. Letting $R=\sum_{I\subset [n+1]}y_I^+ \triangle_I$ and $Q=\sum_{I\subset [n+1]}y_I^- \triangle_I$ and applying Proposition \ref{prop:polytope.factorization} we get the desired representation. 
\end{proof}

The inequality characterization of the cone of deformations in  (\ref{eqn:polymartoid.condition}) is best written concisely in matrix notation, with \emph{weight matrix} $W\in \{0,1\}^{\U_{n-1}\times 2^{n+1}}$, whose entries are given by $W_{C, I}=w^\uparrow_I(C)$ as identified in Proposition \ref{prop:w.matrix}. Then our inequality reads $W\cdot y \geq 0$, which is easily calculated and verified.

\subsection{Examples of Generalized Permutahedra}
Tables 1, and 2 in Appendix \ref{sec:appendix.weights.matrix}  exhibit the extended weight functions and combinatorics of the $n-1$ dimensional cones of the universal fan $\mathcal U$ for $n=2$ and $n=3$, respectively. Example \ref{ex:m.convex} provides a graphic example for $n=2$.

\begin{figure}[h!]
    \centering
\begin{subfigure}[t]{0.45\textwidth}
\centering
\begin{tikzpicture}[scale=0.75]

\draw [line width=0.25mm](3,1) -- (7,1) -- (3,5) -- (3,1);

\node[below] at (3,1) {$e_3=(0,0,1)$};
\node[below] at (7,1) {$e_1=(1, 0,0)$};
\node[above] at (3,5) {$e_2=(0,1,0)$};

\draw[line width=0.1mm] (0,0) rectangle (10,6);
\end{tikzpicture}
\caption{The two-simplex $\triangle_{\{1,2,3\}}$}
\end{subfigure}
    \hspace{0.5cm}
\begin{subfigure}[t]{0.45\textwidth}
\centering
\begin{tikzpicture}[scale=0.75]

\draw [line width=0.25mm](2,0) -- (8,6);
\draw [line width=0.25mm](0,3) -- (10,3);
\draw [line width=0.25mm](5,0) -- (5,6);
\node[below] at (2,3) {$C_{(\{2,3\}, 1)}$};
\node[below] at (8,3) {$C_{(1, \{2,3\})}$};
\node[left] at (5,5) {$C_{(2, \{1,3\})}$};
\node[right] at (5,1) {$C_{(\{1,3\}, 2)}$};
\node[above] at (1.1,0) {$C_{(3, \{1,2\})}$};
\node[below] at (8.7,6) {$C_{(\{1,2\}, 3)}$};

\draw[line width=0.1mm] (0,0) rectangle (10,6);
\end{tikzpicture}
        \caption{The Universal fan $\mathcal U$ for $n=2$}
    \end{subfigure}
\caption{The two-simplex $\triangle_{\{1,2,3\}}$}
\end{figure}

\begin{example}\label{ex:m.convex}
Consider the case $n=2$, then $\triangle_{\{1,2,3\}}=\text{conv}(\{(1,0,0), (0,1,0), (0,0,1)\})$. Let $y^+_{[n+1]}=1, y^+_{\{1,2\}}=2$ and $y_{\{1,2\}}^- =1$, with all other coefficients being zero. One calculates 
\begin{align*}&w^\uparrow_{\{1,2\}}(C_{(1, \{2,3\})})= 0, \qquad w^\uparrow_{\{1,2\}}(C_{(2, \{1,3\})})= 0, \qquad w^\uparrow_{\{1,2\}}(C_{(3, \{1,2\})})= 1 
\\ 
&w^\uparrow_{\{1,2\}}(C_{(\{2,3\}, 1)})= 0, \qquad w^\uparrow_{\{1,2\}}(C_{(\{1,3\}, 2)})= 0, \qquad w^\uparrow_{\{1,2\}}(C_{(\{1,2\}, 3)})= 1
\end{align*} 
and 
\begin{align*}&w^\uparrow_{\{1,2,3\}}(C_{(1, \{2,3\})})= 0, \qquad w^\uparrow_{\{1,2,3\}}(C_{(2, \{1,3\})})= 0, \qquad w^\uparrow_{\{1,2,3\}}(C_{(3, \{1,2\})})= 0 
\\ 
&w^\uparrow_{\{1,2,3\}}(C_{(\{2,3\}, 1)})= 1, \qquad w^\uparrow_{\{1,2,3\}}(C_{(\{1,3\}, 2)})= 1, \qquad w^\uparrow_{\{1,2,3\}}(C_{(\{1,2\}, 3)})= 1
\end{align*} 
Our condition is thus $2 w^\uparrow_{\{1,2\}}(C_\pi) + 1 w^\uparrow_{\{1,2,3\}}(C_\pi) \geq w^\uparrow_{\{1,2\}}(C_\pi)$, which is easily verified to be true. Thus there is a $Q$ such that $Q+ \triangle_{\{1,2\}} = 2\triangle_{\{1,2\}}+\triangle_{\{1,2,3\}}$. This polytope can be seen to be $\triangle_{\{1,2\}}+\triangle_{\{1,2,3\}}$.

Now consider instead the case where $y_{\{1,2\}}^+=1$ and $y_{\{1,2,3\}}^-=1$ with all other coefficients being zero. Then one sees that $w^\uparrow_{\{1,2,3\}}(C_{(\{2,3\}, 1)}) \not \leq w^\uparrow_{\{1,2\}}(C_{(\{2,3\}, 1)})$, hence there is no $Q$ such that $Q+\triangle_{\{1,2,3\}} = \triangle_{\{1,2\}}$.
\end{example}

\begin{example} In Table 1 we calculated the extended weight function for $n=2$. Considering polytopes up to translation, we want to identify those $y=(y_{\{1,2\}}, y_{\{2,3\}}, y_{\{1,3\}}, y_{\{1,2,3\}})\in \Z^4$ for which there is a polytope $Q$ solving (\ref{eqn:signed.minkowski.polymatroid}). We get the following inequalities for $y$: $$y_{\{1,2\}}\geq 0,\quad  y_{\{2,3\}}\geq 0,\quad y_{\{1,3\}}\geq 0$$ and $$y_{\{1,2\}}+ y_{\{1,2,3\}}\geq 0,\quad  y_{\{2,3\}}+ y_{\{1,2,3\}}\geq 0,\quad y_{\{1,3\}}+ y_{\{1,2,3\}}\geq 0.$$ From Example \ref{ex:m.convex} we see that the first case respects the inequalities, while the second case violates them.
\end{example}

\section{Factorization of Generalized Permutahedra for Reflection Groups}\label{sec:minkowski.generalized.permutahedra} 
The approach taken in Section \ref{sec:factorization.generalized.permutahedra.A} readily generalizes root systems other than those of type $A_n$ considered above. This is achieved by replacing the group of permutations on $[n+1]$ by general reflection groups on $\R^n$. We refer to \cite{humphreys1992reflection,brown1989buildings, abramenko2008buildings} for the basic terminology and results. Here we restrict ourselves to the standard scalar product on $\R^n$, and note that this also fixes an isomorphism with the dual $(\R^n)^*$. We denote by $\|\cdot \|$ the associated norm. The reader will easily be convinced by inspecting the proof of Theorem \ref{thm:main} that our approach generalizes to other scalar products. Also we appeal to Remark \ref{rem:generalization} to handle polytopes which need not have an integer realization. 

\subsection{Preliminaries from Finite Reflection Groups}
Let $H\subset \R^{n}$ be a hyperplane. By $s_H$ we denote the reflection along the hyperplane $H$, that is the unique element of the orthogonal group $O(\R^n)$ fixing $H$ and sending any vector $\alpha$ orthogonal to $H$ to $-\alpha$. 
By a \emph{(generalized) root system} we mean a finite set of vectors $\Phi\subset \R^n$ called \emph{roots}, with the property that for any $\alpha \in \Phi$ we have $\text{span}_{\R}(\alpha)\cap \Phi = \{-\alpha, \alpha\}$, and that $\Phi$ is invariant under the set of reflections $s_\alpha:=\alpha_{H_\alpha}$ for all $\alpha \in \Phi$.  Here and in the following we assume that the span of $\Phi$ is $\R^n$, \cite[Def.\ 1.5]{abramenko2008buildings}. While uncustomary we shall also assume that roots have \emph{unit length} with respect to the norm $\|\cdot \|$. Observe that the reflections in $s_\alpha \in O(\R^n)$ preserve these lengths.

 To $\Phi$ we associate the \emph{Coxeter Arrangement} $\Sigma(\Phi)$ consisting of the hyperplanes $H_\alpha$ for $\alpha\in \Phi$. The reflections $s_\alpha$ of the root system generate the associated \emph{Weyl group} $G\subset O(\R^n)$, which acts simply transitively on the chambers of $\Sigma(\Phi)$. Any $\Phi$ together with a total  order on $\R^n$ partitions $\Phi$ uniquely into a set of positive roots $\Phi^+$ and negative roots $\Phi^-$. Contained in $\Phi^+$ is a unique set of linearly independent roots, called \emph{simple roots} $\Delta \subset \Phi^+\subset \Phi$, that span $\R^n$ over $\R$. The reflections $s_\alpha$ for $\alpha\in \Delta$ are termed  \emph{simple reflections} and denoted by $r_1, \ldots, r_n$. The simple reflections are a minimal generating set for the Weyl group, that is, any $g\in G$ can be expressed as a \emph{word} of the form $g=r_{i_1}\cdots r_{i_k}$ for $k\in \N$ and $r_{i}$ being simple reflections. Moreover any reflection is involutive, and the Weyl group is a finitely presented Coxeter group subject to certain restrictions on the orders of pairwise products. This makes them particularly simple to compute, see \cite[Sec.\ 4.8]{bjorner2006combinatorics}. 

For any $I\subset [n]$, the subgroup of the form $$G_I:=\langle r_i~:~i\in I\rangle\subset G $$ is referred to as  a \emph{standard parabolic subgroup}. Subgroups conjugate to standard parabolic groups are called \emph{parabolic subgroups}, which are themselves reflection groups but fix certain subspaces.

\begin{definition}
Let $G=G(\Phi)$ be the Weyl group associated to $\Phi$ acting on $\R^n$. Fix a point  $x\in \R^n$ which is not on any of the hyperplanes in $\Sigma(\Phi)$. The convex hull of the orbit of $x$ under $G$, $$P_G(x)= \text{conv}(\{g.x~:~g\in G \})$$ is called a $\Phi$-\emph{Permutahedron}. 
\end{definition}
The polytope $P_G(x)$ is invariant under the action of $G$. Moreover, since $G$ acts simply and transitively on the chambers of $\Sigma(\Phi)$, every point $g.x$ in the orbit of $G$ is a vertex.

We have the following important Lemma, see e.g.\ \cite{humphreys1992reflection,borovik2003coxeter,hohlweg2011permutahedra}, which lets us identify the face lattice of the family of polytopes $P_G$ with  with the combinatorics of the Weyl group $G$. 
\begin{lemma}\label{lem:parabolic} Let $P_G(x)$ be a $\Phi$-Permutahedron. 
\begin{enumerate}
\item Let $F$ be $k$-dimensional face, and $g\in G$ a word such that $g.x\in F$. Then there is $I\subset [n]$ of cardinality $k$ such that $$F=g.P_{G_I}(x), $$ in other words, each face is a $\Phi$-Permutahedron of a parabolic subgroup. 
\item For $g, h\in G$ and $I\subset [n]$, we have $g.F_{G_I} = h.F_{G_I}$ if and only if $gG_I =hG_I$. Equivalently, faces are parametrized by the cosets $G/G_I$ for $I\subset [n]$.
\item The face lattice of $P_G$ is isomorphic to the poset of parabolic subgroups, that is, $g.F_{G_I} \subset h.F_{G_J}$ if and only if $gG_I \subset hG_J$. 
\end{enumerate}
\end{lemma}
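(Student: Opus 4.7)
The plan is to exploit the fact that the normal fan of $P_G(x)$ is (a refinement of) the Coxeter arrangement $\Sigma(\Phi)$, and to use the simply transitive action of $G$ on the chambers, together with the regularity of $x$. First I would show that the normal cone of $P_G(x)$ at the vertex $x$ is precisely the fundamental Weyl chamber $C_0$ containing $x$: since every other vertex is of the form $g.x$ with $g \in G$, and $G$ is generated by the reflections $s_\alpha$, the cone of functionals $c$ satisfying $c \cdot x \geq c \cdot (s_\alpha x)$ for all $\alpha \in \Phi^+$ reduces (after using $x - s_\alpha x = 2\langle x, \alpha\rangle \alpha$) to the inequalities $\langle c, \alpha\rangle \geq 0$ for all $\alpha \in \Phi^+$, which cut out $C_0$. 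By $G$-invariance the normal cone at $g.x$ is $g.C_0$, so the normal fan of $P_G(x)$ has maximal cones exactly the Weyl chambers of $\Sigma(\Phi)$.

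For part (1), the $k$-dimensional faces of $P_G(x)$ containing $x$ are in bijection with the $(n-k)$-dimensional faces of $C_0$. Each such face of $C_0$ has the form $C_I := C_0 \cap \bigcap_{i \notin I} H_{\alpha_i}$ for a unique $I \subset [n]$ of cardinality $k$, and a standard fact about Coxeter groups says that the pointwise stabilizer in $G$ of a point in the relative interior of $C_I$ is the standard parabolic subgroup $G_I$. Consequently, maximizing a functional from the relative interior of $C_I$ over $P_G(x)$ picks out precisely the orbit $G_I.x$, so the corresponding face is $P_{G_I}(x)$. Now let $F$ be any $k$-dimensional face and $g \in G$ with $g.x \in F$. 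Then $g^{-1}.F$ is a $k$-dimensional face containing $x$, so by the previous sentence $g^{-1}.F = P_{G_I}(x)$ for some $I \subset [n]$ of cardinality $k$, whence $F = g.P_{G_I}(x)$.

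For part (2) I would compute the set-stabilizer of $P_{G_I}(x)$. The inclusion $G_I \subseteq \mathrm{Stab}(P_{G_I}(x))$ is immediate since $G_I$ permutes the vertex set $G_I.x$. Conversely, if $g \in \mathrm{Stab}(P_{G_I}(x))$ then $g.x$ is a vertex of $P_{G_I}(x)$, hence $g.x = g'.x$ for some $g' \in G_I$; simple transitivity of $G$ on the chambers of $\Sigma(\Phi)$ forces the $G$-action on the regular point $x$ to be free, so $g = g' \in G_I$. Thus $g.P_{G_I}(x) = h.P_{G_I}(x)$ iff $g^{-1}h$ stabilizes $P_{G_I}(x)$ iff $g^{-1}h \in G_I$ iff $gG_I = hG_I$. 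For part (3), the direction $gG_I \subseteq hG_J \Rightarrow g.P_{G_I}(x) \subseteq h.P_{G_J}(x)$ follows by observing that every vertex $gg'.x$ of $g.P_{G_I}(x)$ can be rewritten as $hg''.x$ with $g'' \in G_J$, hence is a vertex of $h.P_{G_J}(x)$; and then invoking the general fact that a face of a polytope whose vertices all lie in another face is itself a face of that face. The converse rerun the free-action argument on each vertex of the smaller face.

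The main conceptual obstacle is the input from Coxeter theory in the middle of the proof of part (1): identifying the stabilizer of a point in the relative interior of $C_I$ with $G_I$, and showing that $G_I.x$ is precisely the vertex set of the face of $P_G(x)$ dual to $C_I$. This is where the special structure of $G$ as a finite reflection group genuinely enters; once this is in hand the rest of the lemma is a combinatorial bookkeeping argument about cosets and vertex sets.
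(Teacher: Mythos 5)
The paper does not actually prove this lemma: it states it as a known result and cites Humphreys, Borovik--Gelfand--White and Hohlweg for it. Your proposal therefore supplies an argument where the paper defers entirely to the literature, and the argument you give is the standard one from those references (normal cones of $P_G(x)$ are the Weyl chambers, faces of the fundamental chamber have parabolic stabilizers, then coset bookkeeping), so in substance it is the right proof. Two points should be tightened. First, your indexing of the faces of $C_0$ is off by complementation: as written, $C_I = C_0 \cap \bigcap_{i\notin I} H_{\alpha_i}$ has dimension $|I|=k$ and the pointwise stabilizer of a point in its relative interior is $G_{[n]\setminus I}$, not $G_I$; the cone dual to the $k$-dimensional face $P_{G_I}(x)$ is $C_0 \cap \bigcap_{i\in I} H_{\alpha_i}$, which is $(n-k)$-dimensional and has stabilizer $G_I$. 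Second, your computation of the normal cone at $x$ only imposes the inequalities coming from reflections, which gives the containment $N(x)\subseteq C_0$; to get equality (and likewise to conclude that a functional in the relative interior of a face of $C_0$ is maximized exactly on the parabolic orbit $G_I.x$ and nowhere else) you need the standard dominance fact that $x - w.x$ is a nonnegative combination of positive roots when $x$ is dominant, or alternatively the observation that the $|G|$ normal cones and the $|G|$ chambers both tile $\R^n$ and are matched by inclusion, forcing equality. Both repairs are routine, and parts (2) and (3), which rest on freeness of the $G$-action at the regular point $x$ and on the fact that a face is the convex hull of the vertices it contains, are correct as written.
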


\begin{remark}
	For the root system of type $A_n$, e.g.\ $\tilde \Phi=\{e_i-e_j\in \R^{n+1} :i\neq j,~ i, j\in [n+1]\}$, the Weyl group is isomorphic to the permutations on $[n+1]$. Note that the dimension of the span of $\tilde \Phi$ is $n$, and we harmonize the exposition in this section with that of the previous sections by quotienting the lineality space $\R\cdot (1, \ldots, 1)$. In this case the permutahedron for $A_n$ is just the classical permutahedron, and Lemma \ref{lem:parabolic} formalizes the well known statments that faces of the permutahedron are again permutahedra, parametrized by certain ordered partitions \cite[Sec.\ 0]{ziegler2012lectures}. The set $\Pi_{[n+1]}$, as defined in Section \ref{sec:cones.universal.fan} is nothing but the collection of parabolic subgroups of order two. In particular it indexes the edges of the permutahedron, which are in correspondence with the $n-1$ cones of the Braid arrangement. We relied on this fact in our construction of Section \ref{sec:factorization.generalized.permutahedra.A} and will use its generalization it in the following. 
\end{remark} 

\subsection{Coxeter Polytopes and Cones of the Coxeter Arrangement}
We come to a core definition given in \cite{borovik2003coxeter} which is related to  Coxeter Matroids. We note, however, that there is a discord in the choice of terminology in the literature. Our definition below follows \cite{borovik2003coxeter} in calling deformations of a $\Phi$-Permutahedron, a $\Phi$-Polytope or a \emph{Coxeter polytope}. Like  \cite{postnikov2009permutohedra} we call $A_n$-polytopes \emph{generalized permutahedra}. This was the class or polytopes discussed in Section \ref{sec:factorization.generalized.permutahedra.A}. However, `generalized permutahedra' is also the terminology used by other authors for what we call $\Phi$-Permutahedra, e.g.\ \cite{hohlweg2011permutahedra, borovik2003coxeter}.

\begin{definition}[$\Phi$-Polytope] A polytope $P\subset \R^n$ whose edges are parallel to roots in $\Phi$ will be called a \emph{Coxeter Polytope} or a \emph{$\Phi$-Polytope}.	
\end{definition}

From the considerations in \cite{ardila2019coxeter} and \cite{postnikov2009permutohedra}, we see that Coxeter polytopes can be obtained by deformation of $\Phi$-Permutahedra, which geometrically correspond to parallel shifting of facet defining hyperplanes. Observe that the normal fan of any $\Phi$-Polytope is refined by the Coxeter Arrangement $\Sigma(\Phi)$. To apply Theorem \ref{thm:main} we must understand how the $n-2$ faces intersect with the $n-1$ faces, and how covector map from Section \ref{sec:perliminaries} behaves under the action of the Weyl group. For the root system of type $A_n$ this was done in Section~\ref{sec:factorization.generalized.permutahedra.A} using the symmetric group, here we rely on the terminology of abstract reflection groups. 

In the following we identify the covectors in the sense of Section \ref{sec:prelim.weighted.complexes} with roots. Let $A$ be a $n-2$ cone in $\Sigma(\Phi)$ and write $C(A)$ be the set of $n-1$ cones $F$ such that $F\cap A=A$. Let $\{\alpha_1, \ldots, \alpha_k\} \subset \Phi^+$ be the maximal subset of positive roots such that $A\subset \bigcap_{i\in [k]}H_{\alpha_i}$. Since reflections are involutive, by Lemma \ref{lem:parabolic}, for any $F\in C(A)$ there is a unique $F'\in C(A)$ such that $F\neq F'$, $F\cap F'=A$ and $F, F'\in H_{\alpha_i}$ for some $i\in [k]$. We arbitrarily label $F_i^+:=F$ and $F_i^-:=F'$ and observe that $C(A)$ contains $2k$ cones. We denote the covectors of $\Sigma(\Phi)$ of an $n-1$ cone $F$ with $n-2$ face $A$ by $c_{\Phi, A, F}$. With our notation we have the relation $c_{\Phi, A, F_i^+}= -c_{\Phi, A, F_i^-}$. We remark once more that here the covectors are not assumed to be primitive, but have unit length with respect to $\|\cdot \|$. Our discussion here carries over to the case of primitive covectors and roots. 

\begin{lemma}\label{lem:covector.roots}Let $A$ be an $n-2$ cone and let $C(A)= \{F_1^+, F_1^-, 
\ldots, F_k^+, F_k^- \}$ be the $n-1$ cones with face $A$. Then \begin{equation}\label{eqn:balancing.covectors.roots}
 	\sum_{ i\in [k]}w(F_i^+) c_{\Sigma, A}(F_i^+, x)+w(F_i^-) c_{\Sigma, A}(F_i^-, x) =0  \mod L_{\R}(A) \quad \text{for all}~x\in \R^n
 \end{equation}
holds if and only if 
	\begin{equation}\label{eqn:balancing.roots}
 	\sum_{ i\in [k]}w(F_i^+) \alpha_i-w(F_i^-) \alpha_i =0  \mod L_{\R}(A).
 \end{equation}
In particular the balancing condition can be verified from the roots. 
\end{lemma}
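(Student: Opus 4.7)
The plan is to reduce the equivalence to a statement in the two-dimensional Euclidean quotient $V := \R^n/L_{\R}(A)$, in which the covectors and the roots descend to perpendicular unit vectors related by a single $90^\circ$ rotation, so that the two sums correspond under that rotation.

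First I would rewrite both balancing conditions symmetrically. Since $F_i^+$ and $F_i^-$ both lie in $H_{\alpha_i}$ and share the face $A$, they occupy the two half-spaces of $H_{\alpha_i}$ separated by $L_{\R}(A)$; hence the primitive covectors satisfy $c_{\Sigma, A, F_i^-} = -c_{\Sigma, A, F_i^+}$. Substituting this collapses (\ref{eqn:balancing.covectors.roots}) to
\begin{equation*}
\sum_{i\in[k]} \bigl(w(F_i^+) - w(F_i^-)\bigr)\, c_{\Sigma, A, F_i^+} \equiv 0 \pmod{L_{\R}(A)},
\end{equation*}
while (\ref{eqn:balancing.roots}) already has the parallel form
\begin{equation*}
\sum_{i\in[k]} \bigl(w(F_i^+) - w(F_i^-)\bigr)\, \alpha_i \equiv 0 \pmod{L_{\R}(A)}.
\end{equation*}

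Next I would pass to the quotient $V$. Denote the images of $c_{\Sigma, A, F_i^+}$ and $\alpha_i$ in $V$ by $\bar c_i$ and $\bar\alpha_i$. By construction $\bar c_i$ is a unit vector on the line $H_{\alpha_i}/L_{\R}(A)\subset V$, while $\bar\alpha_i$ is a unit vector orthogonal to that line because $\alpha_i\perp H_{\alpha_i}\supseteq L_{\R}(A)$. After fixing an orientation of $V$, the $90^\circ$ rotation $R$ therefore satisfies $R(\bar\alpha_i) = \epsilon_i \bar c_i$ for signs $\epsilon_i\in\{\pm 1\}$. Provided the arbitrary labeling has been chosen so that $\epsilon_i=1$ uniformly, the root sum in $V$ maps under $R$ exactly to the covector sum in $V$; since $R$ is a linear isomorphism, one sum vanishes if and only if the other does, which is the desired equivalence and the final assertion that the balancing condition can be verified directly from the roots.

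The only delicate point is the sign bookkeeping. Swapping the labels $F_i^+\leftrightarrow F_i^-$ leaves the covector sum unchanged (as it flips both the weight difference and $\bar c_i$) but flips $\epsilon_i$, so such a swap can always be used to align the signs and obtain $\epsilon_i=1$ for every $i$ compatibly with the fixed choice of positive root $\alpha_i$. Once this alignment is carried out, everything else is the elementary observation that a linear isomorphism of a two-dimensional real vector space preserves vanishing linear combinations.
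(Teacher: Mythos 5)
Your argument is correct and follows essentially the same route as the paper's proof: pass to the two-dimensional quotient $\R^n/L_{\R}(A)$, observe that covectors and roots are related there by a rotation through a right angle, and use linearity of that rotation to transport one vanishing condition to the other. Your explicit handling of the sign alignment $\epsilon_i=1$ (via relabelling $F_i^+\leftrightarrow F_i^-$) is in fact more careful than the paper's proof, which fixes the rotation without comment and defers the coherence of the labelling to the remark about the Weyl group in the following subsection.
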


\begin{proof}
Consider $\R^n/ L_{\R}(A)\simeq \R^2$. In this case the covectors $c_{\Phi, A, F_i^+}$ and $c_{\Phi, A, F_i^-}$ are orthogonal to the roots $\alpha_i$ and $-\alpha_i$. Fixing a rotation of $\R^2$ by a right angle  we obtain a linear transformation from the covectors to roots. Since the balancing conditions (\ref{eqn:balancing.covectors.roots}) and (\ref{eqn:balancing.roots}) are linear with respect to this rotation we get the claimed equivalence. 
\end{proof}

\subsection{Coxeter Weight Matrix}\label{sec:coxeter.weight.matrix} Fix a root system $\Phi$ and consider the fan on $\Sigma(\Phi)$. Let $m$ be the number of $n-1$ dimensional cones of the fan, and identify each such cone with a coordinate in $[m]$. Given a $n-2$ dimensional cone $A$ of $\Sigma(\Phi)$, we have the  linear map $\phi^A:\R^m \to \text{hom}_{\R}(\R/L_{\R}(A), \R)$ defined via $$w \mapsto \sum_{F_i^+, F_i^- \in C(A)}(w_{F_i^+}-w_{F_i^-})\alpha_i.$$ We remark that the labelling of all $n-1$ cones in a hyperplane $H_\alpha$ can be chosen consistently, and so can the association of covectors and roots, as a consequence of the fact that the Weyl group gives a consistent labelling. As in Section \ref{sec:minkowski.factorization.bases} certain elements of its kernel index balanced weighted fans.

\begin{proposition}\label{prop:coxeter.factorization}
Let $\phi$ be the map associated to the root system $\Phi$. Set $$W(\Phi):= \bigcap_{A \in \Sigma(\Phi)_{n-2}}\ker_{\R} (\phi^A)\cap \R^m_{\geq 0}. $$Then the elements in $W(\Phi)$ are in bijection with all $\Phi$-Polytopes.
\end{proposition}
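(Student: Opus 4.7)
The plan is to build the bijection in both directions using Theorem \ref{thm:main} together with Lemma \ref{lem:covector.roots}, which translates the balancing condition (formulated in terms of covectors) into the balancing equations (\ref{eqn:balancing.roots}) whose kernel defines $W(\Phi)$. Since $\Phi$-polytopes need not admit an integer realization, I would invoke the real-coefficient variant of Theorem \ref{thm:main} described in Remark \ref{rem:generalization}, so that the fan $\Sigma(\Phi)$, weights, and roots can be treated with the standard inner product $\|\cdot\|$ rather than with primitive integer data.

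For the forward direction, let $P$ be a $\Phi$-polytope. Since every edge of $P$ is parallel to some root, each $n-1$ dimensional cone of $\mathcal N(P)$ is contained in some $H_\alpha$ with $\alpha\in\Phi$, hence $\Sigma(\Phi)$ refines $\mathcal N(P)$. Extend $w_P$ to $w_P^\uparrow$ on $\Sigma(\Phi)_{n-1}$ via Definition \ref{def:extension}; by Lemma \ref{lem:extended.balanced} this extension makes $(\Sigma(\Phi),w_P^\uparrow)$ balanced in the covector formulation. Lemma \ref{lem:covector.roots} then gives $w_P^\uparrow\in\ker_\R\phi^A$ for every $n-2$ cone $A$, and $w_P^\uparrow\geq 0$ by construction, so $w_P^\uparrow\in W(\Phi)$.

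For the reverse direction, take $w\in W(\Phi)$. Applying Lemma \ref{lem:covector.roots} in the opposite direction, $(\Sigma(\Phi),w)$ is a balanced weighted fan in the covector sense. The real-coefficient form of Theorem \ref{thm:mikhalin} (Remark \ref{rem:generalization}) produces a piecewise affine convex function $f_w$, unique up to an affine summand, whose corner locus equipped with its intrinsic weights reproduces $(\Sigma(\Phi),w)$. Setting $P_w:=\mathrm{Newt}(f_w)$, the polytope $P_w$ has normal fan refined by $\Sigma(\Phi)$; since each $n-1$ cone of $\Sigma(\Phi)$ lies in some $H_\alpha$, each edge of $P_w$ (being dual to such a cone) is parallel to a root, so $P_w$ is a $\Phi$-polytope. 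Modding out by translations (i.e.\ by the affine ambiguity in $f_w$), the assignments $P\mapsto w_P^\uparrow$ and $w\mapsto P_w$ are mutually inverse: the inductive procedure (\ref{eqn:inductive.definition}) that recovers $f_w$ from $(\Sigma(\Phi),w)$ guarantees that its Newton polytope has exactly $w$ as the extension of its own edge weight function.

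The main subtlety I expect is bookkeeping around translations and the non-lattice case. The construction of $P_w$ only determines it up to translation, and conversely $w_P^\uparrow$ is insensitive to translating $P$, so the statement must be read modulo translation (or with a chosen normalization, e.g.\ fixing $f_w$ on a distinguished chamber as in the proof of Lemma \ref{lem:extended.balanced}). The secondary point to verify carefully is that the consistent labelling $F_i^\pm$ of adjacent chambers along each hyperplane $H_{\alpha_i}$ and the sign convention identifying covectors with roots (fixed by the chosen right-angle rotation in Lemma \ref{lem:covector.roots}) are compatible across all $n-2$ cones; the simply transitive action of the Weyl group $G$ on chambers of $\Sigma(\Phi)$ makes this coherence routine, and it is what is implicitly used in the paragraph preceding the proposition.
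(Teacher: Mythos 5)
Your proposal is correct and follows essentially the same route as the paper: both directions of the bijection rest on extending $w_P$ to a balanced weight function on $\Sigma(\Phi)$ and, conversely, on recovering a polytope from a non-negative balanced weight via Theorem \ref{thm:mikhalin} (the paper phrases this as Proposition \ref{prop:polytope.factorization} with $Q=\{0\}$). Your treatment is in fact more explicit than the paper's about the translation ambiguity, the real-coefficient variant from Remark \ref{rem:generalization}, and the covector-to-root dictionary of Lemma \ref{lem:covector.roots}, all of which the paper leaves implicit.
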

\begin{proof}
Firstly observe that $W(\Phi)$ is non-empty. Indeed, for $x \in \Sigma(\Phi)^c$, we have that the $\Phi$-Permutahedron $P=P_G(x)$ has weight function $w_C>0$ for all $C \in \Sigma(\Phi)_{n-1}$. Since $w$ turns $\Sigma(\Phi)$ into a balanced fan, we have that $w \in W(\Phi)$. For the proof of the claimed bijection observe that by Proposition \ref{prop:polytope.factorization} applied to the case $Q=\{0\}$, we have that any $w \in W(\Phi)$ defines a balanced fan and thus a polytope by Theorem \ref{thm:mikhalin}, which must be a $\Phi$ polytope since its normal fan is refined by $\Sigma(\Phi)$. 
 \end{proof}

We define a \emph{$\Phi$ Factorization Basis} as in Definition \ref{def:factorization.basis}, and denote it by $\mathcal B(\Phi)$. The associated basis vectors of $W(\Phi)$ are denoted by $B(\Phi)$. Ever element $B\in \mathcal B(\Phi)$ corresponds to a unique $\Phi$-Polytope and has weight function $b_i\in B(\Phi)\subset W(\Phi)$. As an immediate consequence of Propositions \ref{prop:polytope.unique.factorization} and \ref{prop:coxeter.factorization} we obtain the following result which generalizes the known factorization of polymatroids to Coxeter polytopes and simultaneously answers Problem \ref{prob:polymatroid.factorization} for this class of polytopes. 

\begin{theorem}\label{thm:coxeter.factorization}
Let $\Phi$ be a root system, $\mathcal B(\Phi)$ a $\Phi$ polytope factorization basis, and $B(\Phi)$ the associated vector space basis. Then the following assertions hold.
\begin{enumerate}
\item If $P$ is a $\Phi$-Polytope there is a unique set of weights $\{y_B\in \R:B\in \mathcal B(\Phi)\}$, such that \begin{equation}\label{eqn:coxeter.signed.minkowski} P+\sum_{B\in \mathcal B(\Phi)}y^-_BB= \sum_{B\in \mathcal B(\Phi)}y^+_BB. \end{equation}
\item  If $\{y_B \in \R: B\in \mathcal B(\Phi) \}$ are weights, then there exists a $\Phi$-Polytope $P$ such that (\ref{eqn:coxeter.signed.minkowski}) holds if and only if $$\sum_{B\in \mathcal B(\Phi)} y_B b_B \geq 0, $$ where the  inequality is understood component-wise.
\end{enumerate}
\end{theorem}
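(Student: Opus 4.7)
The plan is to derive both assertions by assembling the earlier machinery: Proposition \ref{prop:coxeter.factorization} gives a bijection between $W(\Phi)$ and $\Phi$-polytopes, Proposition \ref{prop:polytope.factorization} (extended to the real setting via Remark \ref{rem:generalization}) turns Minkowski identities into pointwise weight identities, and Proposition \ref{prop:polytope.unique.factorization} encapsulates the uniqueness argument. The proof is essentially a translation between three equivalent languages: $\Phi$-polytopes, balanced non-negative weight functions on $\Sigma(\Phi)$, and coefficient vectors relative to $B(\Phi)$.

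For (1), I would begin with a $\Phi$-polytope $P$, whose normal fan by definition is refined by $\Sigma(\Phi)$. Definition \ref{def:extension} and Lemma \ref{lem:extended.balanced} produce an extended weight function $w_P^\uparrow \in W(\Phi)$. Since $B(\Phi)$ is an $\R$-basis for the linear span of $W(\Phi)$, there exist unique coefficients $y_B \in \R$ with $w_P^\uparrow = \sum_B y_B b_B$. Splitting each $y_B = y_B^+ - y_B^-$ and rearranging yields
\begin{equation*}
w_P^\uparrow + \sum_{B \in \mathcal B(\Phi)} y_B^- b_B = \sum_{B \in \mathcal B(\Phi)} y_B^+ b_B,
\end{equation*}
and because each side is a non-negative, balanced weight function on $\Sigma(\Phi)$, Proposition \ref{prop:coxeter.factorization} identifies both sides with $\Phi$-polytopes. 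Applying Proposition \ref{prop:polytope.factorization} (in its real-valued form from Remark \ref{rem:generalization}) to each summand gives the Minkowski identity (\ref{eqn:coxeter.signed.minkowski}); uniqueness of $\{y_B\}$ is inherited directly from the uniqueness of the basis expansion.

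For (2), given weights $\{y_B\}$, set $w := \sum_B y_B b_B$. The \emph{only if} direction is immediate by taking the extended weight function of both sides of (\ref{eqn:coxeter.signed.minkowski}), which forces $w_P^\uparrow = w$ and therefore $w \geq 0$ componentwise. For the \emph{if} direction, if $w \geq 0$, then $w \in W(\Phi)$ (balancedness being linear, hence preserved under arbitrary real combinations of the $b_B$), so Proposition \ref{prop:coxeter.factorization} produces a $\Phi$-polytope $P$ with $w_P^\uparrow = w$, and the argument of part (1) yields the desired identity.

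The only genuinely non-mechanical point is ensuring that a factorization basis $B(\Phi)$ consisting of non-negative vectors actually exists, i.e.\ that the linear span of $W(\Phi)$ admits an $\R$-basis lying in $W(\Phi)$ itself. This mirrors the existence argument at the start of the proof of Proposition \ref{prop:polytope.unique.factorization}: the Coxeter permutahedron $P_G(x)$ for a generic $x$ is itself a $\Phi$-polytope whose weight function is strictly positive on every codimension-one cone of $\Sigma(\Phi)$, so $W(\Phi)$ contains a strictly positive element; standard linear algebra then produces a basis of non-negative vectors. Beyond this observation, no ingredient specific to finite reflection groups enters, and the main conceptual subtlety—passing from $\Z$-valued to $\R$-valued weights because $\Phi$-polytopes need not be lattice polytopes—is dispatched by Remark \ref{rem:generalization}.
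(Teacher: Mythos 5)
Your proposal is correct and follows exactly the route the paper intends: the paper gives no separate proof, declaring the theorem an immediate consequence of Propositions \ref{prop:polytope.unique.factorization} and \ref{prop:coxeter.factorization}, and your argument is precisely the fleshed-out version of that derivation (bijection with $W(\Phi)$, unique basis expansion, translation back to Minkowski sums via Proposition \ref{prop:polytope.factorization} and Remark \ref{rem:generalization}). Your closing remark on the existence of a non-negative basis, mirroring the positivity of the weight function of $P_G(x)$, is the same existence argument the paper uses in Propositions \ref{prop:polytope.unique.factorization} and \ref{prop:coxeter.factorization}.
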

In the following example we explicit a basis of polytopes sufficient to factorize all Coxeter polytopes for the root system of type $BC_2$.

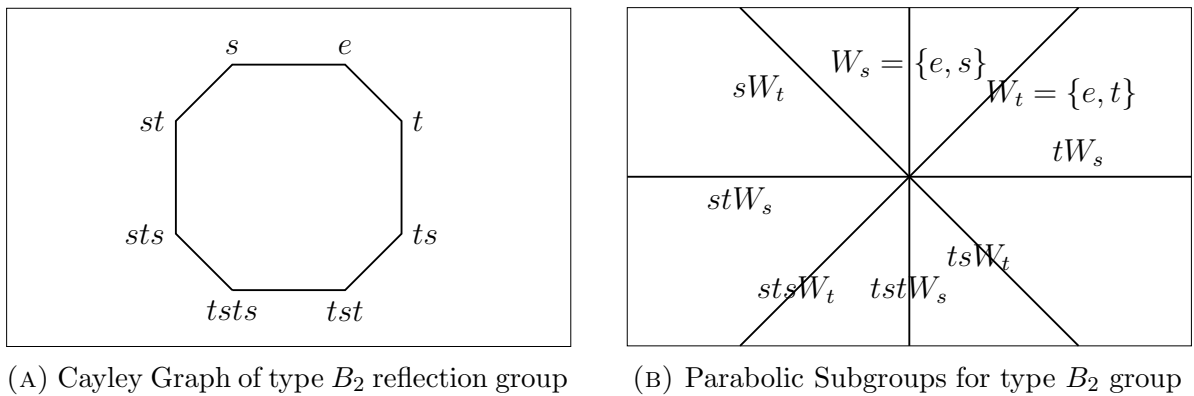
\begin{figure}[h!]
    \centering
\begin{subfigure}[t]{0.45\textwidth}
\centering
\begin{tikzpicture}[scale=0.75]

\draw [line width=0.25mm](4,1) -- (3,2) -- (3,4) -- (4,5) -- (6,5) -- (7,4) -- (7,2) -- (6,1) -- (4,1);

\node[below] at (4,1) {$tsts$};
\node[left] at (3,2) {$sts$};
\node[left] at (3,4) {$st$};
\node[above] at (4,5) {$s$};
\node[below] at (6,1) {$tst$};
\node[right] at (7,2) {$ts$};
\node[right] at (7,4) {$t$};
\node[above] at (6,5) {$e$};

\draw[line width=0.1mm] (0,0) rectangle (10,6);
\end{tikzpicture}
\caption{Cayley Graph of type $B_2$ reflection group}
\end{subfigure}
    \hspace{0.5cm}
\begin{subfigure}[t]{0.45\textwidth}
\centering
\begin{tikzpicture}[scale=0.75]

\draw [line width=0.25mm](0,3) -- (10,3);
\draw [line width=0.25mm](5,0) -- (5,6);
\draw [line width=0.25mm](2,0) -- (8,6);
\draw [line width=0.25mm](8,0) -- (2,6);

\node[below] at (2,3) {$stW_s$};
\node[below left] at (3,5) {$sW_t$};
\node at (5,5) {$W_s=\{e,s\}$};
\node[above] at (7.7,4) {$W_t=\{e,t\}$};
\node[above] at (8,3) {$tW_s$};
\node[below left] at (7,2) {$tsW_t$};
\node at (5,1) {$tstW_s$};
\node at (3,1) {$stsW_t$};

\draw[line width=0.1mm] (0,0) rectangle (10,6);
\end{tikzpicture}
        \caption{Parabolic Subgroups for type $B_2$ group}
    \end{subfigure}
\caption{Cayley graph and fan lattice labeled by parabolic subgroups. Figure accompanies Example \ref{ex:b2.basis}.}\label{fig:cayley.b2}
\end{figure}

\begin{example}\label{ex:b2.basis}
Consider Figure \ref{fig:cayley.b2}, depicting the Weyl group on generators $t, s$ with its Cayley graph. It can be seen to arise from a root system of type $B_2$, which represents the symmetry group of the square. The covectors $c_C:\R^2 \to \R$ in this case are given by the following matrix

 $$\Bigg(\frac{1}{\sqrt 2}  \begin{pmatrix} 1 \\ 1 \end{pmatrix},\begin{pmatrix} 0 \\ 1 \end{pmatrix}, \frac{1}{\sqrt 2}\begin{pmatrix} -1 \\ 1 \end{pmatrix}, \begin{pmatrix} -1 \\ 0 \end{pmatrix}, \frac{1}{\sqrt 2}\begin{pmatrix} -1 \\ -1 \end{pmatrix}, \begin{pmatrix} 0 \\ -1 \end{pmatrix}, \frac{1}{\sqrt 2}\begin{pmatrix} 1 \\ -1 \end{pmatrix}, \begin{pmatrix} 1 \\ 0 \end{pmatrix}\Bigg), $$ 

 where we have labeled the $n-1$ dimensional cones in the columns in the following oder $$(W_t, W_s, sW_t, stW_s, stsW_t,tstW_s, tsW_t, tW_s).$$  Panel (\textsc b) of Figure \ref{fig:cayley.b2} depicts the fan $\Sigma(\Phi)$ for this root system with its $n-1$ dimensional cones labeled by parabolic subgroups. A basis consisting of coordinate-wise non-negative vectors $B(\Phi)$ is given by the following vectors
$$\begin{bmatrix}
    b_1 \\ b_2 \\ b_3\\b_4\\b_5\\b_6\\b_7
\end{bmatrix}=\begin{bmatrix}
    1 & 0 & 0 & 0 & 1 & 0 & 0 & 0 \\
    0 & 1 & 0 & 0 & 0 & 1 & 0 & 0 \\
    0 & 0 & 1 & 0 & 0 & 0 & 1 & 0 \\
    0 & 0 & 0 & 1 & 0 & 0 & 0 & 1 \\
    1 & 0 & 0 & 2\sqrt{2} & 0 & 0 & 1 & 0 \\
    0 & 0 & \sqrt{2} & 1 & 0 & 1 & 0 & 0 \\
    \sqrt{2} & 0 & 0 & 1 & 0 & 1 & 0 & 0 \\
    
\end{bmatrix}$$
The corresponding polytopes are precesely those from Figure \ref{fig:basis}, however here weights correspond to edge lengths with respect to $\|\cdot \|$, instead of their lattice lengths. 
\end{example}

\begin{example}\label{ex:coxeter.expansion}
The two polytopes from Figure \ref{fig:expansion.basis.2} are  $\Phi$ polytopes. The first polytope possesses the $\mathcal B(\Phi)$ basis expansion $$P_1= 2 B_1 + \sqrt{2}B_2 + B_3 + \sqrt 2B_4  - B_6-B_7,$$ and the second is obtained via $$P_2 = 2 B_1 + \sqrt 2 B_2 +\sqrt 2 B_4 - P_1 = B_6+ B_7-B_3. $$ Moreover, these expansions are unique with respect to $\mathcal B(\Phi)$. 
\end{example}

\appendix
\section{W-Matrix for $n=2, 3$}\label{sec:appendix.weights.matrix}

In the following we calculate the weight matrix of type $A_n$ for $n=2$ and $n=3$ using Proposition \ref{prop:w.matrix}. Here we labeled the rows by elements of $\Pi_{[n+1]}$ and the columns by subsets $I\subset [n+1]$ which correspond to the faces $\triangle_I\hookrightarrow\triangle_{[n+1]}$ of the geometric simplex. 
\begin{table}[h!]
\begin{tabular}{ |p{3.9cm}||p{2.5cm}|p{2.5cm}|p{2.5cm}| p{2.5cm}| }
 \hline
 \multicolumn{5}{|c|}{Cones of the universal fan for n=2, and extended weights} \\
 \hline
 ~ & $\{1,2\}$ & $\{2,3\}$ & $\{1,3\}$ &$ \{1,2,3\}$ \\
 \hline
 $(\{1,2\}, 3)$ & 1 & 0 & 0 & 1 \\
 $(\{2,3\}, 1)$ & 0 & 1 & 0 & 1 \\
 $(\{1,3\}, 2)$ & 0 & 0 & 1 & 1 \\
 $(1, \{2,3\})$ & 0 & 1 & 0 & 0 \\
 $(2, \{1,3\})$ & 0 & 0 & 1 & 0 \\
 $(3, \{1,2\})$ & 1 & 0 & 0 & 0 \\
 \hline
\end{tabular}
\end{table}

\newpage
\begin{landscape}
{\small
\begin{table}[h]
\begin{tabular}{ |p{2.5cm}||p{1cm}|p{1cm}|p{1cm}| p{1cm}|p{1cm}|p{1cm}| p{1.5cm}|p{1.5cm}|p{1.5cm}| p{1.5cm}|p{1.8cm}|}
 \hline
 \multicolumn{12}{|c|}{Cones of the universal fan for n=3, and extended weights} \\
 \hline
 ~ & $\{1,2\}$ & $\{2,3\}$ & $\{3,4\}$ & $\{1,3\}$ & $\{2,4\}$ & $\{1,4\}$ & $\{1,2,3\}$ & $\{1,2,4\}$ & $\{1,3,4\}$ & $\{2,3,4\}$ & $\{1,2,3,4\}$  \\
 \hline
 $(\{1,2\}, 3, 4)$ & 1 & 0 & 0 & 0 & 0 & 0 & 1 & 1 & 0 & 0 & 1 \\
 $(\{1,2\}, 4, 3)$ & 1 & 0 & 0 & 0 & 0 & 0 & 1 & 1 & 0 & 0 & 1 \\
 $(\{1,3\}, 2, 4)$ & 0 & 0 & 0 & 1 & 0 & 0 & 1 & 0 & 1 & 0 & 1 \\
 $(\{1,3\}, 4, 2)$ & 0 & 0 & 0 & 1 & 0 & 0 & 1 & 0 & 1 & 0 & 1 \\
 $(\{1,4\}, 2, 3)$ & 0 & 0 & 0 & 0 & 0 & 1 & 0 & 1 & 1 & 0 & 1 \\
 $(\{1,4\}, 3, 2)$ & 0 & 0 & 0 & 0 & 0 & 1 & 0 & 1 & 1 & 0 & 1 \\
 $(\{2,3\}, 1, 4)$ & 0 & 1 & 0 & 0 & 0 & 0 & 1 & 0 & 0 & 1 & 1 \\
 $(\{2,3\}, 4, 1)$ & 0 & 1 & 0 & 0 & 0 & 0 & 1 & 0 & 0 & 1 & 1 \\
 $(\{2,4\}, 1, 3)$ & 0 & 0 & 0 & 0 & 1 & 0 & 0 & 1 & 0 & 1 & 1 \\
 $(\{2,4\}, 3, 1)$ & 0 & 0 & 0 & 0 & 1 & 0 & 0 & 1 & 0 & 1 & 1 \\
 $(\{3,4\}, 1, 2)$ & 0 & 0 & 1 & 0 & 0 & 0 & 0 & 0 & 1 & 1 & 1 \\
 $(\{3,4\}, 2, 1)$ & 0 & 0 & 1 & 0 & 0 & 0 & 0 & 0 & 1 & 1 & 1 \\

 $(3, \{1,2\}, 4)$ & 1 & 0 & 0 & 0 & 0 & 0 & 0 & 1 & 0 & 0 & 0 \\
 $(4, \{1,2\}, 3)$ & 1 & 0 & 0 & 0 & 0 & 0 & 1 & 0 & 0 & 0 & 0 \\
 $(2, \{1,3\}, 4)$ & 0 & 0 & 0 & 1 & 0 & 0 & 0 & 0 & 1 & 0 & 0 \\
 $(4, \{1,3\}, 2)$ & 0 & 0 & 0 & 1 & 0 & 0 & 1 & 0 & 0 & 0 & 0 \\
 $(2, \{1,4\}, 3)$ & 0 & 0 & 0 & 0 & 0 & 1 & 0 & 0 & 1 & 0 & 0 \\
 $(3, \{1,4\}, 2)$ & 0 & 0 & 0 & 0 & 0 & 1 & 0 & 1 & 0 & 0 & 0 \\
 $(1, \{2,3\}, 4)$ & 0 & 1 & 0 & 0 & 0 & 0 & 0 & 0 & 0 & 1 & 0 \\
 $(4, \{2,3\}, 1)$ & 0 & 1 & 0 & 0 & 0 & 0 & 1 & 0 & 0 & 0 & 0 \\
 $(1, \{2,4\}, 3)$ & 0 & 0 & 0 & 0 & 1 & 0 & 0 & 0 & 0 & 1 & 0 \\
 $(3, \{2,4\}, 1)$ & 0 & 0 & 0 & 0 & 1 & 0 & 0 & 1 & 0 & 0 & 0 \\
 $(1, \{3,4\}, 2)$ & 0 & 0 & 1 & 0 & 0 & 0 & 0 & 0 & 0 & 1 & 0 \\
 $(2, \{3,4\}, 1)$ & 0 & 0 & 1 & 0 & 0 & 0 & 0 & 0 & 1 & 0 & 0 \\

 $(3, 4, \{1,2\})$ & 1 & 0 & 0 & 0 & 0 & 0 & 0 & 0 & 0 & 0 & 0 \\
 $(4, 3, \{1,2\})$ & 1 & 0 & 0 & 0 & 0 & 0 & 0 & 0 & 0 & 0 & 0 \\
 $(2, 4, \{1,3\})$ & 0 & 0 & 0 & 1 & 0 & 0 & 0 & 0 & 0 & 0 & 0 \\
 $(4, 2, \{1,3\})$ & 0 & 0 & 0 & 1 & 0 & 0 & 0 & 0 & 0 & 0 & 0 \\
 $(2, 3, \{1,4\})$ & 0 & 0 & 0 & 0 & 0 & 1 & 0 & 0 & 0 & 0 & 0 \\
 $(3, 2, \{1,4\})$ & 0 & 0 & 0 & 0 & 0 & 1 & 0 & 0 & 0 & 0 & 0 \\
 $(1, 4, \{2,3\})$ & 0 & 1 & 0 & 0 & 0 & 0 & 0 & 0 & 0 & 0 & 0 \\
 $(4, 1, \{2,3\})$ & 0 & 1 & 0 & 0 & 0 & 0 & 0 & 0 & 0 & 0 & 0 \\
 $(1, 3, \{2,4\})$ & 0 & 0 & 0 & 0 & 1 & 0 & 0 & 0 & 0 & 0 & 0 \\
 $(3, 1, \{2,4\})$ & 0 & 0 & 0 & 0 & 1 & 0 & 0 & 0 & 0 & 0 & 0 \\
 $(1, 2, \{3,4\})$ & 0 & 0 & 1 & 0 & 0 & 0 & 0 & 0 & 0 & 0 & 0 \\
 $(2, 1, \{3,4\})$ & 0 & 0 & 1 & 0 & 0 & 0 & 0 & 0 & 0 & 0 & 0 \\
 \hline
 \hline
\end{tabular}
\end{table}
}
\end{landscape}

\bibliographystyle{plain}
\bibliography{factorization.bib}
\end{document}